\numberwithin{equation}{section}
\theoremstyle{plain}
\theoremstyle{remark}
\def \1{{\bf 1}}
\def\C{{\mathbb{C}}}
\def\Z{{\mathbb{Z}}}
\def\N{{\mathbb{N}}}
\theoremstyle{definition}
\newtheorem{lemma}{Lemma}[section]
\newtheorem{theorem}[lemma]{Theorem}
\newtheorem{corollary}[lemma]{Corollary}
\newtheorem{proposition}[lemma]{Proposition}
\newtheorem{definition}[lemma]{Definition}
\newtheorem{example}[lemma]{Example}
\newtheorem{remark}[lemma]{Remark}
\title{$Y(z)$-injective vertex superalgebras and Hopf actions}
\author{Chao Yang  \footnote{supported by NSFC (No. 12301039), 
		the Fundamental Research Funds for the Central Universities (No. 2682024CX012), 
		and the Natural Science Foundation of Sichuan Province  (No. 2025ZNSFSC0797)} }
\affil{School of Mathematics,  Southwest Jiaotong University,
	Chengdu 611756, China }
\begin{document}
	
\maketitle

\abstract

This paper investigates \( Y(z) \)-injective vertex superalgebras. 
We first establish that two fundamental classes of vertex superalgebras\text{---}simple ones and those admitting a PBW basis\text{---}are \( Y(z) \)-injective. 
We then study actions of Hopf algebras on \( Y(z) \)-injective vertex superalgebras and prove that
every finite-dimensional Hopf algebra acting inner faithfully on such algebras must be a group algebra.
As a direct consequence, the study of the structure and representation theory of fixed-point subalgebras under 
finite-dimensional Hopf algebra actions reduces to that under group actions.

\section{Introduction}

To unify the study of group actions and Lie algebra actions on vertex operator algebras, 
a notion of Hopf algebra actions was introduced in \cite{DW}.
Given a vertex operator algebra $V$ with an action of a Hopf algebra $H$,  the fixed point subspace $V^H$ is also a vertex operator algebra.
Two central problems arise in this context:
1) Determine what types of Hopf algebras can act on a vertex operator algebra;
2) Understand  the structure and representation theory of $V^H$. 
 
In \cite{DW}, it was established that any finite-dimensional 
Hopf algebra admitting a faithful action on a simple vertex operator algebra is necessarily a group algebra.
Building on this foundation, recent work in \cite{DRY2} generalizes the results of \cite{DW} to  the setting of vertex algebras.
It proves that any finite-dimensional Hopf algebra acting inner faithfully on a $Y(z)$-injective vertex algebra must be a group algebra.
We note that the term "$\pi_2$-injective vertex algebra"  used in \cite{DRY2} is  referred to as "$Y(z)$-injective vertex algebra" in this paper.
The primary objective of this paper is to extend  the results of \cite{DRY2} to vertex superalgebras.

A vertex (super)algebra $V$ is said to be \emph{$Y(z)$-injective} if the linear map
\[
Y(z) \colon V \otimes V \to V(\!(z)\!) , \quad 
u \otimes v \mapsto Y(u, z)v \quad \text{for } u, v \in V,
\]
is injective. Vertex (super)algebras satisfying this $Y(z)$-injectivity condition 
play crucial roles in orbifold theory and the study of Hopf algebra actions on vertex (super)algebras \cite{ALPY1,CRY, DM,DRY1,DRY2,DY,T}. 
In this work, we investigate $Y(z)$-injective vertex superalgebras.

We first establish $Y(z)$-injectivity for two fundamental classes of vertex superalgebras:
simple vertex superalgebras and those with a PBW basis. 
For simple vertex superalgebras, we adapt the method from \cite{DRY2}. 
The key new ingredient is Lemma \ref{simple-AVD}, 
which asserts that a simple vertex superalgebra $V$ 
remains simple as an $A(V, \mathcal{D})$-module. 
This preservation of simplicity is non-trivial for vertex superalgebras 
because they can possess non-trivial nonhomogeneous ideals.

For vertex superalgebras with a PBW basis, we adapt an argument from \cite{Li1}---originally 
developed to prove nondegeneracy of such vertex algebras---to establish their $Y(z)$-injectivity, specifically by showing that: (1) for such $V$, the filtered commutative vertex superalgebra $\operatorname{gr}_E(V)$ is $Y(z)$-injective [Theorem \ref{main-Y(z)}], and (2) this $Y(z)$-injectivity of $\operatorname{gr}_E(V)$ implies that of $V$ [Lemma \ref{Yz-1}].

Finally, we investigate what kinds of Hopf algebras can act inner faithfully on a $Y(z)$-injective vertex algebra. 
Using arguments analogous to those for vertex algebras in \cite{DRY2},
we show that if a finite-dimensional Hopf algebra acts inner-faithfully on a $Y(z)$-injective
vertex superalgebra, then it must be a group algebra. 
As a consequence, the structure and representation theory of fixed-point subalgebras under finite-dimensional Hopf actions reduces to that of group actions.

This paper is organized as follows: In Section 2, we review foundational concepts and key examples of vertex superalgebras.
In Section 3, we show that all simple vertex superalgebras are $Y(z)$-injective. 
In Section 4, we prove that vertex superalgebras admitting a PBW basis are $Y(z)$-injective. 
In Section 5, we prove that every finite-dimensional Hopf algebra acting inner faithfully on a $Y(z)$-injective vertex superalgebra must be a group algebra.

{\bf Conventions}: Throughout this paper, we work over the complex field $\C$.
The unadorned symbol $\otimes$ means the tensor product over $\C$. 
We denote by $\N$ the set of nonnegative integers.
$\Z_2=\{\bar{0},  \bar{1} \}$ denotes the cyclic group of order $2$.

\section{Preliminaries}

\subsection{Vertex superalgebras}

A  {\em vector superspace} is a vector space $V$ with a  $\mathbb{Z}_{2}$-grading $V=V_{\bar{0}} \oplus V_{\bar{1}}.$
An element $u$ in $V$ is said to be {\em homogeneous} if it belongs to either $ V_{\bar 0} $ or $V_{\bar 1}$.
The elements of $V_{\bar 0}$ (resp., $V_{\bar 1}$) are called {\em even} (resp., {\em odd}).
If $u \in V_{\bar i}$ for $ i \in \{0, 1\}$, we write $|u|=i$.

Let $V$ be a vector superspace. The {\em canonical linear automorphism} $\sigma_V \colon V \to V$ is defined by
$\sigma_V(u)=(-1)^{|u|}u$ for any homogeneous element $u \in V$.
For any subspace $W$ of $V$, define $W_{\bar 0}=W \cap V_{\bar 0}$ and $W_{\bar 1}=W \cap V_{\bar 1}.$
A subspace $W$ of $V$ is called a \emph{homogeneous subspace} (or \emph{subsuperspace}) if it can be decomposed as $W = W_{\bar{0}} \oplus W_{\bar{1}}$.
Equivalently, $W$ is homogeneous if and only if it is stable under $\sigma_V$ (i.e., $\sigma_V(W) = W$).

For a vector superspace $V$, let $|V|$ denote the underlying
vector space obtained by forgetting its $\Z_2$-grading.

\begin{definition}
	A vertex superalgebra is a triple \((V, Y(\ , z), \mathbf{1})\) consisting of:
	\begin{itemize}
		\item A vector superspace \(V = V_{\bar{0}} \oplus V_{\bar{1}}\),
		\item The vacuum vector \(\mathbf{1} \in V_{\bar{0}}\),
		\item A linear map:
		\[	Y(\ , z) : V \to \text{End}_{\mathbb{C}}(V)[[z, z^{-1}]], \quad \ \
		v \mapsto Y(v, z)= \sum_{n \in \mathbb{Z}} v_n z^{-n-1} \]
	\end{itemize}
	satisfying:
\begin{enumerate}[{(1)}]
	\item Given $u, v \in V$, we have $u_nv=0$ \ for $n \gg 0$.
	
	\item $Y({\bf 1},z)=id_{V}$, \  and \   $Y(v,z){\bf 1}= v +(v_{-2}{\bf 1})z +  \cdots \in V[[z]]$.
	
	\item If $u \in V_{\alpha}$ and $v \in V_{\beta}$,  then  $u_nv \in V_{\alpha+\beta}$  \ for any $\alpha, \beta \in \Z_2$ and any $n \in \Z$.
	
	\item The following Jacobi identity holds for any homogeneous $u , v ,w  \in V$:
	\begin{align*}
		& \displaystyle{ z^{-1}_0\delta\left(\frac{z_1-z_2}{z_0}\right)  Y(u,z_1)Y(v,z_2)w
			-(-1)^{|u||v|}z^{-1}_0\delta\left(\frac{z_2-z_1}{-z_0}\right) Y(v, z_2)Y(u,z_1)}w\\
		& \displaystyle{=z_2^{-1}\delta\left(\frac{z_1-z_0}{z_2}\right) Y(Y(u,z_0)v,z_2)}w.
	\end{align*}
\end{enumerate}
\end{definition}

\begin{definition}
	Let $T$ be a positive integer. A \emph{$\frac{1}{T}\mathbb{N}$-graded vertex superalgebra} is a vertex superalgebra $V$ equipped with a $\frac{1}{T}\mathbb{N}$-grading
	\[
	V = \bigoplus_{n \in \frac{1}{T}\mathbb{N}} V_n 
	\]
	satisfying the following conditions:
	\begin{enumerate}[(1)]
		\item $\mathbf{1} \in V_0$;
		\item $V_\alpha = \bigoplus\limits_{n \in \frac{1}{T}\mathbb{N}} (V_\alpha \cap V_n)$ for each $\alpha \in \Z_2$;
		\item $u_sV_{n} \subseteq V_{n+m-s-1}$ for any $u \in V_m, s \in \Z$, and $ m, n \in \frac{1}{T}\N$.
	\end{enumerate}
If $v \in V_n$ for $n \in \frac{1}{T}\mathbb{N}$, write $\deg v = n$.
An element $u \in V$ is \emph{$(\mathbb{Z}_2 \times \frac{1}{T}\mathbb{N})$-homogeneous} 
if $u \in V_\alpha \cap V_n$ for some $\alpha \in \mathbb{Z}_2$, $n \in \frac{1}{T}\mathbb{N}$.

\end{definition}

\begin{definition}
	Let $V$ be a vertex superalgebra, and let $U\subset V$ be a  subset. $V$ is said to be strongly generated by $U$ if $V$ is spanned by elements of the form:
	$$u^{1}_{-n_{1}}...u^{r}_{-n_{r}}{\bf 1}, $$
	where $r \geq 0$, $u^{1},...,u^{r} \in U$, and $n_{i}\geq 1$ for all $i$.
\end{definition}

For a vertex superalgebra \(V\), let \(\mathcal{D}\) be the even linear map 
\(\mathcal{D} \colon V \to V\) defined by \(\mathcal{D}(v) = v_{-2}\mathbf{1}\) for \(v \in V\).

\begin{proposition}[\cite{LL}]\label{Der}
	The following identities hold for homogeneous elements \(u, v \in V\):
	\begin{enumerate}[(1)]
		\item \(Y(\mathcal{D}v, z) = \frac{d}{dz}Y(v, z)\);
		\item \(Y(u, z)v = (-1)^{|u||v|} e^{z\mathcal{D}} Y(v, -z)u\).
	\end{enumerate}
\end{proposition}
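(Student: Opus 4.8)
The plan is to obtain both identities from the Jacobi identity together with the vacuum axiom, adapting the non-super arguments of \cite{LL} while carrying the Koszul sign $(-1)^{|u||v|}$ through every step. As preliminaries I would first record three even-parity consequences (so that no signs intervene in them): the creation formula $Y(v,z)\mathbf{1}=e^{z\mathcal{D}}v$; the $\mathcal{D}$-bracket property $[\mathcal{D},Y(v,z)]=Y(\mathcal{D}v,z)$; and its exponentiated form, the conjugation formula $e^{z_0\mathcal{D}}Y(v,z)e^{-z_0\mathcal{D}}=Y(v,z+z_0)$. Each of these is a standard consequence of axioms (2) and (4).

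For part (1) I would argue directly. Specialize the Jacobi identity by taking the second input to be the vacuum, i.e.\ replace $v$ by $\mathbf{1}$. Since $Y(\mathbf{1},z_2)=\mathrm{id}$ and $|\mathbf{1}|=\bar{0}$, the left-hand side collapses to a difference of two delta functions applied to $Y(u,z_1)w$, which by the standard two-term delta-function identity equals $z_2^{-1}\delta\!\left(\frac{z_1-z_0}{z_2}\right)Y(u,z_1)w$; this is exactly the delta function already appearing on the right. Applying $\Res_{z_1}$ to both sides then yields the operator identity $Y(Y(u,z_0)\mathbf{1},z_2)=Y(u,z_0+z_2)$. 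Differentiating in $z_0$ and setting $z_0=0$ gives $Y(\mathcal{D}u,z_2)=\frac{d}{dz_2}Y(u,z_2)$, which is (1); expanding the same operator identity as a Taylor series in $z_0$ and using the creation formula recovers $Y(u,z_0+z_2)=Y(e^{z_0\mathcal{D}}u,z_2)$, consistent with the preliminaries above.

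For part (2) I would start from weak commutativity, extracted from the Jacobi identity by multiplying by $z_0^{N}$ and taking $\Res_{z_0}$: for $N\gg0$,
\[
(z_1-z_2)^{N}Y(u,z_1)Y(v,z_2)=(-1)^{|u||v|}(z_1-z_2)^{N}Y(v,z_2)Y(u,z_1),
\]
and it is precisely the second term of the Jacobi identity that produces the sign $(-1)^{|u||v|}$. Applying both sides to $\mathbf{1}$, rewriting each product by the creation formula, and then commuting the exponentials past the vertex operators by the conjugation formula turns this into
\[
(z_1-z_2)^{N}e^{z_2\mathcal{D}}Y(u,z_1-z_2)v=(-1)^{|u||v|}(z_1-z_2)^{N}e^{z_1\mathcal{D}}Y(v,z_2-z_1)u .
\]
Substituting $z_1=z+z_2$, cancelling the common factor $z^{N}$, and finally setting $z_2=0$ (legitimate because no negative powers of $z_2$ survive) gives $Y(u,z)v=(-1)^{|u||v|}e^{z\mathcal{D}}Y(v,-z)u$, which is (2).

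The main obstacle is bookkeeping rather than ideas. The two terms of the Jacobi identity carry opposite delta-function expansion conventions, so each residue, cancellation, and specialization\text{---}particularly the removal of $(z_1-z_2)^{N}$ and the substitution $z_2=0$\text{---}must be justified inside the correct space of formal series rather than performed naively. In parallel, since parity is defined only for homogeneous elements, the sign $(-1)^{|u||v|}$ must be introduced for homogeneous $u,v$ and then extended by bilinearity. Once these conventions are pinned down, the computations are routine and run exactly as in \cite{LL}.
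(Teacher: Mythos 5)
Your proof is correct: both identities are derived exactly as in the standard argument of \cite{LL} (vacuum specialization of the Jacobi identity for the $\mathcal{D}$-derivative formula, and weak commutativity applied to $\mathbf{1}$ together with the creation and conjugation formulas for skew-symmetry), with the Koszul sign $(-1)^{|u||v|}$ correctly traced to the second term of the super Jacobi identity. The paper itself offers no proof and simply cites \cite{LL}, so your argument is precisely the one the citation points to.
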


\begin{definition}

An \emph{automorphism} of a vertex superalgebra $V$ is an {\bf even} invertible linear map $g : V \to V$ 
satisfying $g(\mathbf{1}) = \mathbf{1}$ and 
$g Y(u, z) g^{-1} = Y(gu, z)$ for all $u \in V$. 
The set of all automorphisms of $V$ is denoted $\operatorname{Aut}(V)$. 
Note that the canonical automorphism $\sigma_V$ lies in the center of $\operatorname{Aut}(V)$.

\end{definition}

\begin{definition}  \label{main-def}

Let $V$ be a vertex superalgebra.

\begin{enumerate}[{(1)}]

  \item  A \emph{left ideal} of $V$ is a subsuperspace $I$ satisfying $u_s I \subseteq I$ for all $u \in V$ and $s \in \mathbb{Z}$.

  \item  An \emph{ideal} of $V$ is a left ideal $I$ with $u_n v \in I$ for all $u \in I, v \in V$  and $ n \in \mathbb{Z}$.

  \item The vertex superalgebra $V$ is \emph{irreducible} if it has no nonzero proper left ideals.

  \item The vertex superalgebra $V$ is \emph{simple} if it has no nonzero proper ideals. 
  
 \end{enumerate}
\end{definition}

\begin{remark}
	
By definition, every irreducible vertex superalgebra is simple. 
For $\frac{1}{2}\mathbb{N}$- or $\mathbb{N}$-graded vertex operator superalgebras, 
irreducibility and simplicity are equivalent. 
However, this equivalence is not universal: 
there exist simple vertex superalgebras that are not irreducible \cite{DRY2}.

\end{remark}

\begin{proposition} ([LL])
Let $V$ be a vertex superalgebra. Then $I$ is an ideal of $V$ if and only if $I$ is a $\mathcal{D}$-stable left ideal (i.e., $\mathcal{D} I \subset I$).
\end{proposition}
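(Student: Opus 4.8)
The statement is an equivalence, so I would prove the two implications separately, first reducing to the case of homogeneous elements. Since both an ideal and a left ideal are required to be subsuperspaces, $I$ is stable under $\sigma_V$ and decomposes as $I = I_{\bar 0} \oplus I_{\bar 1}$; by bilinearity of each $n$-th product $(u,v)\mapsto u_n v$, every condition to be checked splits into its $\mathbb{Z}_2$-homogeneous components, so it suffices to verify them for homogeneous $u, v$.

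The forward implication is immediate. If $I$ is an ideal then it is by definition a left ideal, so I only need $\mathcal{D}I \subseteq I$. For $v \in I$, the ideal condition ($u_n w \in I$ for $u \in I$, $w \in V$, $n \in \mathbb{Z}$) applied with $u = v$, $w = \mathbf{1} \in V$, and $n = -2$ yields $v_{-2}\mathbf{1} \in I$, that is, $\mathcal{D}(v) \in I$. Hence $I$ is a $\mathcal{D}$-stable left ideal.

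For the reverse implication, suppose $I$ is a $\mathcal{D}$-stable left ideal. I must show $u_n v \in I$ for every $u \in I$, $v \in V$, and $n \in \mathbb{Z}$. The key tool is the skew-symmetry formula of Proposition \ref{Der}(2),
\[
Y(u,z)v = (-1)^{|u||v|}\, e^{z\mathcal{D}}\, Y(v,-z)u .
\]
Expanding both sides and comparing the coefficient of $z^{-n-1}$ expresses $u_n v$ as a finite $\mathbb{C}$-linear combination of terms of the form $\mathcal{D}^{k}(v_{k+n}u)$ with $k \geq 0$; the sum is finite because $v_m u = 0$ for $m \gg 0$ by the truncation axiom (1) in the definition of a vertex superalgebra. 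For each such term, $v_{k+n}u \in I$ since $u \in I$ and $I$ is a left ideal, and then $\mathcal{D}^{k}(v_{k+n}u) \in I$ by the $\mathcal{D}$-stability $\mathcal{D}I \subseteq I$. Therefore $u_n v \in I$, which is precisely the extra condition upgrading the left ideal $I$ to an ideal.

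There is no serious obstacle here; the argument is a direct application of skew-symmetry. The only points needing a little care are the bookkeeping of signs and indices when extracting the coefficient of $z^{-n-1}$ from $e^{z\mathcal{D}}Y(v,-z)u$, and the observation that the resulting sum over $k$ is finite, so that the manipulation is legitimate term by term.
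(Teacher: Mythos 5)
Your proof is correct and is essentially the argument the paper is pointing to: the proposition is stated without proof, with a citation to [LL], and the standard proof there is precisely your skew-symmetry computation (forward direction via $\mathcal{D}u = u_{-2}\mathbf{1} \in I$, reverse direction via $u_n v = (-1)^{|u||v|}\sum_{k\geq 0}\frac{(-1)^{n+k+1}}{k!}\mathcal{D}^k(v_{n+k}u)$, a finite sum by truncation). The reduction to homogeneous elements and the use of $\mathcal{D}$-stability plus the left-ideal property are exactly the right ingredients, so nothing is missing.
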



\subsection{Examples}

The following examples of vertex superalgebras will be useful later.

\begin{example} \label{exam1}
	Let $A$ be a commutative associative superalgebra with identity element $1_A$.  
	That is, for any homogeneous elements $a, b \in A$, we have $ab = (-1)^{|a||b|}ba$.  
	Let $\partial$ be an even superderivation of $A$, i.e., for any $a, b \in A$, we have $\partial(ab) = \partial(a)b + a\partial(b)$.  
	In this context, the pair $(A, \partial)$ is called a commutative differential superalgebra.  
	For $a, b \in A$, we define  
	\[Y^{(A,\partial)}(a,z)b = (e^{z\partial}a)b = \sum_{n=0}^{\infty}\frac{1}{n!} (\partial^n a) b  z^n.\]  
	Then $(A, Y^{(A,\partial)}( \ , z), 1_A)$ forms a commutative vertex superalgebra.  
	If there is no ambiguity, we may use $(A, \partial)$ to denote the vertex superalgebra.
	
	Let $\mathfrak{h} = \mathfrak{h}_{\bar{0}} \oplus \mathfrak{h}_{\bar{1}}$ be a vector superspace.  
	Let $\mathfrak{h} \otimes t^{-1}\mathbb{C}[t^{-1}]$ be the commutative Lie superalgebra with  
	even part $\mathfrak{h}_{\bar{0}} \otimes t^{-1}\mathbb{C}[t^{-1}]$ and odd part $\mathfrak{h}_{\bar{1}} \otimes t^{-1}\mathbb{C}[t^{-1}]$.  
	For simplicity, we use $h(-n)$ to denote $h \otimes t^{-n}$ for $h \in \mathfrak{h}$ and $n > 0$.  
	Let $\mathcal{F}(\mathfrak{h}) = \mathcal{U}(\mathfrak{h} \otimes t^{-1}\mathbb{C}[t^{-1}])$  
	be the universal enveloping algebra of the commutative Lie superalgebra $\mathfrak{h} \otimes t^{-1}\mathbb{C}[t^{-1}]$.  
	Then $\mathcal{F}(\mathfrak{h})$ is a commutative associative superalgebra.  
	Let $\partial$ be the even derivation of $\mathcal{F}(\mathfrak{h})$ uniquely determined by  
	$\partial(h(-n)) = n h(-n-1)$ for $h \in \mathfrak{h}$ and $n > 0$.  
	The pair $(\mathcal{F}(\mathfrak{h}), \partial)$ forms a free commutative differential superalgebra.  
	In particular, it is naturally a commutative vertex superalgebra.
	
	In what follows, if $\mathfrak{g}$ is a Lie superalgebra, $\mathcal{F}(\mathfrak{g})$  
	denotes the commutative vertex superalgebra associated with the underlying vector superspace of $\mathfrak{g}$.
	
\end{example}

\begin{example} \label{Ex-aff}  (\cite{K})
	Let $\mathfrak{g}$ be a finite dimensional Lie superalgebra with an even supersymmetric invariant bilinear form $(\ , )$.
	Consider the Affine Lie superalgebra defined by
	$$\widetilde{\mathfrak{g}}=\mathfrak{g} \otimes \C[t, t^{-1}] \oplus \C K,$$
	with Lie brackets given by:
	$$[x(m), y(n)]=[x,y](m+n)+m\delta_{m+n, 0}(x, y)K \ \ \text{and} \ \ [K, \ \widetilde{\mathfrak{g}}]=0,$$
	for $ x, y \in \mathfrak{g}$ and $ m ,n \in \Z,$  where $x(m)$ denotes $x\otimes t^m$.
	
	Given a complex number $k$, 
	let $\mathfrak{g}[t]$ act trivially on $\C$ and let $K$ act on $\C$ as multiplication by $k$, making $\C$ a $\mathfrak{g}[t] \oplus \C K$-module.
	We form the induced module
	$$V_{\widetilde{\mathfrak{g}}}(k, 0)=\mathcal{U}(\widetilde{\mathfrak{g}}) \otimes_{\mathfrak{g}[t] \oplus \C K} \C.$$
	Here and below,  $\mathcal{U}(\widetilde{\mathfrak{g}})$ denotes  the universal enveloping algebra of the Lie superalgebra $\widetilde{\mathfrak{g}}$.
    For convenience, set ${\bf 1}= 1 \otimes 1 \in V_{\widetilde{\mathfrak{g}}}(k, 0)$.
	Then $V_{\widetilde{\mathfrak{g}}}(k, 0)$ admits a unique vertex superalgebra structure satisfying
	$$Y(x(-1){\bf 1}, z) = \sum_{n \in \Z} x(n)z^{-n-1},$$
	for all $x \in \mathfrak{g}$.
	Note that $V_{\widetilde{\mathfrak{g}}}(k, 0) $ is an $\N$-graded vertex superalgebra,
	with $\text{deg}\ x(-1){\bf 1}=1$ for any $x \in \mathfrak{g}$.
	
\end{example}

\begin{example} \label{Ex-NS}  (\cite{K,Li3})
	Let $NS$ be the Neveu-Schwarz Lie superalgebra
	\begin{align*}
		NS=\big(\oplus_{m\in \Z}\C L(m) \big) \bigoplus  \big( \oplus_{n\in \Z}\C G(n+\frac{1}{2}) \big)  \bigoplus \C C,
	\end{align*}
	with the following commutation relations:
	\begin{align*}
		&[L(m), L(n)]=(m-n)L(m+n)+\frac{m^3-m}{12}\delta_{m+n, 0}C,\\
		&[L(m), G(n+\frac{1}{2})]=(\frac{m}{2}-n-\frac{1}{2})G(m+n+\frac{1}{2}),\\
		&[G(m+\frac{1}{2}), G(n-\frac{1}{2})]_+=2L(m+n)+\frac{1}{3}m(m+1)\delta_{m+n, 0}C,\\
		&[NS, C]=0.
	\end{align*}
	Let
	
	$$NS_+=\bigoplus_{n \geq 1} \big( \C L(n) \oplus \C G( n - \frac{1}{2}) \big), \ \ \text{and} \ \ NS_0=\C L(0)\oplus \C C.$$
	Then $NS_+ \oplus NS_0$ is a Lie subalgebra of $NS$. 
	For any $c \in C$, let $\C$ be the $(NS_+ \oplus NS_0)$-module 
	such that the actions of $ NS_+ \oplus \C L(0)$ on $\C$ are trivial, and the action of $C$ on $\C$ is multiplication by the scalar $c$.
	We now consider the induced module 
	$$V_{NS}(c, 0)=\mathcal{U}(NS) \otimes_{ (NS_+ \oplus NS_0)} \C.$$ 
	For convenience, we set ${\bf 1}= 1 \otimes 1 \in V_{NS}(c, 0)$.
	We further let $\widetilde{V}_{NS}(c, 0)=V_{NS}(c, 0)/\langle G(-\frac{1}{2}) {\bf 1} \rangle$,
	where $\langle G(-\frac{1}{2}){\bf 1} \rangle$ is the submodule generated by $G(-\frac{1}{2}){\bf 1}$. 
	Then $\widetilde{V}_{NS}(c, 0)$ forms a $\frac{1}{2}\N$-graded vertex operator superalgebra.
	This superalgebra is generated by the even element $L(-2) {\bf 1}$ of degree $2$ and the odd element $G(-\frac{3}{2}){\bf 1}$ of degree $\frac{3}{2}$. 
	The corresponding vertex operators are
	$$Y(L(-2) {\bf 1} , z)=\sum_{n \in \Z} L(n)z^{-n-2},$$ 
	and 
	$$Y(G(-\frac{3}{2}){\bf 1}, z)= \sum_{n \in \Z} G(n+\frac{1}{2})z^{-n-2}.$$
\end{example}


\section{$Y(z)$-injectivity for simple vertex superalgebras}

\begin{definition}
	A vertex superalgebra $V$ is said to be \emph{$Y(z)$-injective} if the linear map
	\[
	Y(z) \colon V \otimes V \to V(\!(z)\!) , \quad 
	u \otimes v \mapsto Y(u, z)v \quad \text{for } u, v \in V,
	\]
	is injective.
\end{definition}

\begin{remark}
Similar to the vertex algebra case,  $Y(z)$-injective vertex superalgebras possess many excellent properties.
For example: For any such vertex superalgebra $V$ and finite subgroup $G \leq \operatorname{Aut}(V)$, 
every irreducible representation of $G$ appears in $V$ 
(established analogously to the vertex algebra case in \cite[Proposition 4.2]{DRY2}). 
\end{remark}

Since the tensor product of vector spaces is left exact, the following property holds immediately.
\begin{lemma} \label{sub-Y(z)}
Let $V$ be a $Y(z)$-injective vertex superalgebra, and $U \subseteq V$ be a vertex subsuperalgebra.
Then $U$ is also $Y(z)$-injective.
\end{lemma}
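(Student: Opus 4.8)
The plan is to exploit the commutativity of a natural square relating the structure maps of $U$ and $V$, together with the fact that $\C$ is a field. First I would record the basic observation that since $U$ is a vertex subsuperalgebra of $V$, its vertex operator map is simply the restriction of that of $V$: for $u, v \in U$ one has $Y_U(u,z)v = Y_V(u,z)v$, an element of $U(\!(z)\!) \subseteq V(\!(z)\!)$. Writing $\iota \colon U \hookrightarrow V$ for the inclusion and $j \colon U(\!(z)\!) \hookrightarrow V(\!(z)\!)$ for the induced inclusion of formal Laurent series spaces, this says precisely that
\[
Y_V(z) \circ (\iota \otimes \iota) = j \circ Y_U(z)
\]
commutes as maps $U \otimes U \to V(\!(z)\!)$, since both send $u \otimes v$ to $Y(u,z)v$.

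Next I would use that tensoring over a field preserves injections: because $\iota$ is injective and $\C$ is a field, the map $\iota \otimes \iota \colon U \otimes U \to V \otimes V$ is again injective. By hypothesis $Y_V(z)$ is injective, so the composite $Y_V(z) \circ (\iota \otimes \iota)$ is injective. By the commuting identity above, this composite equals $j \circ Y_U(z)$, which is therefore injective; since a composite can only be injective if its first factor is injective, $Y_U(z)$ must be injective, as desired.

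There is essentially no real obstacle here. The only point requiring any care is the verification that the displayed identity holds, which amounts to unwinding the definition of a vertex subsuperalgebra (that the structure map on $U$ is the restriction of that on $V$) and checking that the codomains are identified compatibly, i.e.\ that $U(\!(z)\!)$ sits inside $V(\!(z)\!)$ so that the two vertex operators genuinely agree. Once this identification is in place, the argument is a purely formal diagram chase relying only on the left-exactness of $- \otimes -$ over $\C$, which is exactly the remark preceding the statement.
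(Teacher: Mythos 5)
Your proposal is correct and is exactly the argument the paper intends: the paper's one-line justification (left exactness of the tensor product) is precisely your observation that $\iota\otimes\iota$ is injective, combined with the fact that $Y_U(z)$ is the restriction of $Y_V(z)$. You have simply spelled out the same diagram chase in full detail.
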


In the remainder of this section, we shall prove the $Y(z)$-injectivity of countable-dimensional simple vertex superalgebras.
 
Let $V$ be a vertex superalgebra, 
and let $A(V, \mathcal{D})$ denote the associative subalgebra of $\text{End}(V)$
generated by the operators $\mathcal{D}$ and $u_n$ for $u \in V$ and $n \in \Z$.
In the following Lemma \ref{simple-AVD},  we treat  $A(V, \mathcal{D})$ as an ordinary algebra (not a superalgebra),
So the underlying vector space $|V|$ carries the structure of an $A(V, \mathcal{D})$-module.
Moreover, an $A(V, \mathcal{D})$-submodule $I \subseteq V$ is an ideal of $V$ if and only if $I$ is $\sigma_V$-stable.

\begin{lemma} \label{simple-AVD}
A vertex superalgebra $V$ is simple if and only if $|V|$ is a simple $A(V, \mathcal{D})$-module. 	
\end{lemma}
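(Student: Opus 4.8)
The plan is to prove the two implications separately, the reverse being routine and the forward one carrying all the content. Throughout I will use that $\mathbf{1}$ generates $|V|$ as an $A(V,\mathcal{D})$-module: for every $v \in V$ one has $v = v_{-1}\mathbf{1}$ with $v_{-1} \in A(V,\mathcal{D})$, so any $A(V,\mathcal{D})$-submodule containing $\mathbf{1}$ is all of $V$. For the implication ``$|V|$ simple $\Rightarrow V$ simple'', observe that by the remark preceding the lemma every ideal of $V$ is in particular a $\sigma_V$-stable $A(V,\mathcal{D})$-submodule; simplicity of $|V|$ then forces it to be $0$ or $V$, so $V$ has no nonzero proper ideals.

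The forward direction is the hard one, precisely because an $A(V,\mathcal{D})$-submodule need not be $\sigma_V$-stable (homogeneous). Let $I$ be a nonzero $A(V,\mathcal{D})$-submodule; I must show $I = V$. First I would record that $\sigma_V$ normalizes $A(V,\mathcal{D})$: since $\mathcal{D}$ is even and $\sigma_V u_n \sigma_V^{-1} = (\sigma_V u)_n$ by the automorphism property of $\sigma_V$, conjugation by $\sigma_V$ permutes the generators of $A(V,\mathcal{D})$. Consequently $\sigma_V(I)$ is again an $A(V,\mathcal{D})$-submodule, and therefore so are $I + \sigma_V(I)$ and $I \cap \sigma_V(I)$; moreover both are $\sigma_V$-stable (as $\sigma_V^2 = \mathrm{id}$), hence are ideals of $V$. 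Since $V$ is simple and $I \neq 0$, we get $I + \sigma_V(I) = V$ and $I \cap \sigma_V(I) \in \{0, V\}$. If $I \cap \sigma_V(I) = V$ then $I = V$ and we are done, so the whole difficulty concentrates in the remaining case $V = I \oplus \sigma_V(I)$ with $\sigma_V$ interchanging the two summands. This splitting case is the main obstacle.

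To rule it out I would exploit an odd symmetry. Let $p \colon V \to V$ be the projection onto $I$ along $\sigma_V(I)$; since both summands are $A(V,\mathcal{D})$-submodules, $p$ is an $A(V,\mathcal{D})$-module endomorphism, and $q := 2p - \mathrm{id}$ is an involution ($q^2 = \mathrm{id}$) commuting with every $u_n$ and with $\mathcal{D}$. Because $\sigma_V$ swaps $I$ and $\sigma_V(I)$ one checks $\sigma_V q = -q\sigma_V$, so $q$ is an \emph{odd} linear map; in particular $w := \tfrac12 q(\mathbf{1})$ lies in $V_{\bar 1}$, and $q(\mathbf{1}) = 2w$, $q(w) = \tfrac12 q^2(\mathbf{1}) = \tfrac12\mathbf{1}$. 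Now I would feed $\mathbf{1}$ into the identity $q\,Y(w,z) = Y(w,z)\,q$ (valid since $q$ commutes with each $w_n$): using the creation property $Y(w,z)\mathbf{1} = e^{z\mathcal{D}}w$ and that $q$ commutes with $\mathcal{D}$, the left side is $q(e^{z\mathcal{D}}w) = e^{z\mathcal{D}}q(w) = \tfrac12\mathbf{1}$, while the right side is $Y(w,z)q(\mathbf{1}) = 2\,Y(w,z)w$. Hence $Y(w,z)w = \tfrac14\mathbf{1}$ is constant in $z$.

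Finally, skew-symmetry (Proposition \ref{Der}(2)) for the odd element $w$ gives $Y(w,z)w = -e^{z\mathcal{D}}Y(w,-z)w = -\tfrac14\mathbf{1}$, so $\tfrac14\mathbf{1} = -\tfrac14\mathbf{1}$ and therefore $\mathbf{1} = 0$, contradicting $V \neq 0$. Thus the splitting case cannot occur, forcing $I = V$ and proving that $|V|$ is a simple $A(V,\mathcal{D})$-module. The one point demanding care is the verification that $q$ is odd and commutes with the action — that the purely module-theoretic projection interacts with $\sigma_V$ exactly as an odd operator — since this is what converts the nonhomogeneous splitting into a genuine odd vector $w$ whose self-products are then pinned down by skew-symmetry.
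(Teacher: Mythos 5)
Your proof is correct and follows essentially the same route as the paper's: the reduction to the splitting $V = I \oplus \sigma_V(I)$ via the ideals $I \cap \sigma_V(I)$ and $I + \sigma_V(I)$, and the odd involution commuting with the $A(V,\mathcal{D})$-action (your $q$ is exactly the paper's $f$), are identical. The only divergence is the endgame: the paper computes $Y(fu,z)fv$ in two ways for arbitrary even $u,v$ to conclude that $Y$ vanishes on $V_{\bar 0}\otimes V_{\bar 0}$, whereas you specialize to the single odd vector $w=\tfrac12 q(\mathbf{1})$ and use the creation property plus one application of skew-symmetry to force $\tfrac14\mathbf{1}=-\tfrac14\mathbf{1}$ --- a slightly more economical but equivalent contradiction.
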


\begin{proof}

Assume that $V$ is a simple vertex superalgebra, but $|V|$ is not a simple $A(V, \mathcal{D})$-module.
Then there exists a nonzero proper $A(V, \mathcal{D})$-submodule $I$ of $|V|$. 
Clearly, both $I \cap \sigma_V(I)$ and $I + \sigma_V(I)$ are $\sigma_V$-stable $A(V, \mathcal{D})$-submodules;
consequently, they form ideals of $V$.
The simplicity of $V$ implies $I \cap \sigma_V(I) = 0$ and $I + \sigma_V(I) = V$ (i.e. $I \oplus \sigma_V(I)=V$).
Thus we define a linear isomorphism $f: V \to V$ by
\begin{align}\label{eq3-1}
	f(x) = x, \quad f(\sigma_V(x)) = -\sigma_V(x) \quad \text{for} \quad x \in I.
\end{align}
As $I + \sigma_V(I) = V$, we obtain the decompositions:
\begin{align}\label{eq3-2}
V_{\bar 0}=\{x + \sigma_V(x) ~ | ~ x\in I \} \ \ \ \text{and} \ \ \ V_{\bar 1}=\{x - \sigma_V(x) ~ | ~ x\in I \}.
\end{align}
From (3.3) we deduce $f V_{\bar 0} \subseteq V_{\bar 1}$ and $f V_{\bar 1} \subseteq V_{\bar 0}$.

We claim that \( fY(u, z)v = Y(u, z)fv \) for any \( u, v \in V \). 
To see this, observe that:
\begin{align*}
	\text{if } v \in I,         & \quad Y(u, z)fv = Y(u, z)v = fY(u, z)v, \\
	\text{if } v \in \sigma_V(I), & \quad Y(u, z)fv = -Y(u, z)v = fY(u, z)v.
\end{align*}


Since $I$ and $\sigma_V (I)$ are $\mathcal{D}$-stable, \eqref{eq3-1} implies $f \mathcal{D} = \mathcal{D} f$. 
For homogeneous $u, v \in V_{\bar 0}$, we compute:
\begin{align*}
	Y(fu, z)fv  
	&= fY(fu, z)v \\
	&= (-1)^{|fu||v|} f e^{z\mathcal{D}} Y(v, -z) fu \\
	&= f e^{z\mathcal{D}} f Y(v, -z) u \\
	&= f^2 e^{z\mathcal{D}} Y(v, -z) u \\
	&= f^2 Y(u, z)v,
\end{align*}
and
\begin{align*}
	Y(fu, z)fv  
	&= (-1)^{|fu||fv|} e^{z\mathcal{D}} Y(fv, -z) fu \\
	&= -e^{z\mathcal{D}} Y(fv, -z) fu \\
	&= -e^{z\mathcal{D}} f Y(fv, -z) u \\
	&= -f e^{z\mathcal{D}} Y(fv, -z) u \\
	&= -f e^{z\mathcal{D}} (-1)^{|fv||u|} e^{-z\mathcal{D}} Y(u, z) fv \\
	&= -f Y(u, z) fv \\
	&= -f^2 Y(u, z)v.
\end{align*}
As $f$ is a linear isomorphism, we have $Y(u, z) v = 0$ for all $u, v \in V_{\bar 0}$, 
contradicting $Y(\mathbf{1}, z) = \operatorname{id}$. 
Therefore, $|V|$ is a simple $A(V, \mathcal{D})$-module.
The converse is trivial, completing the proof.
 
\end{proof}

\begin{theorem} \label{S-YS}
If $V$ is a simple vertex superalgebra of countable dimension, then the linear map $Y(z)$ defined above is injective.
\end{theorem}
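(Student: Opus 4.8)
The plan is to transfer the statement to pure module theory via Lemma \ref{simple-AVD} and then run the density argument of \cite{DRY2}. Write $A = A(V,\mathcal{D})$. By Lemma \ref{simple-AVD}, $|V|$ is a simple $A$-module. Since $V$ has countable dimension over $\mathbb{C}$, the Dixmier--Schur lemma applies: the endomorphism ring $D = \operatorname{End}_A(|V|)$ is a division ring which embeds $\mathbb{C}$-linearly into $|V|$ (via $d \mapsto dv$ for a fixed $0 \neq v$), hence is itself of countable dimension; every $d \in D$ is therefore algebraic over the algebraically closed field $\mathbb{C}$, forcing $D = \mathbb{C}$. With trivial commutant, the Jacobson density theorem guarantees that $A$ acts densely on $|V|$: for any linearly independent $w^1, \dots, w^r$ and any prescribed targets, some element of $A$ realizes them.

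The engine of the proof is the observation that $\ker Y(z)$ is stable under letting $A$ act on the left-hand tensor factors. Concretely, suppose $\sum_i Y(u^i, z)v^i = 0$. Applying $Y(c, z_1)$ for $c \in V$ and invoking weak associativity term by term (valid after multiplying by $(z_0+z_2)^N$ for $N$ large and uniform over the finitely many $i$), the common factor $Y(c, z_0+z_2)$ pulls out of the sum and annihilates the zero vector; extracting the coefficient of $z_0^{-l-1}$ then yields $\sum_i Y(c_l u^i, z)v^i = 0$ for every $c \in V$ and $l \in \mathbb{Z}$. Together with $\sum_i Y(\mathcal{D}u^i, z)v^i = \partial_z \sum_i Y(u^i, z)v^i = 0$ (Proposition \ref{Der}(1)) and linearity, this shows $\sum_i Y(a u^i, z)v^i = 0$ for all $a \in A$, because $A$ is generated by the modes $c_l$ together with $\mathcal{D}$. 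I want to stress that this step is sign-free: super signs enter skew-symmetry and commutativity, but weak associativity carries none, so no parity bookkeeping intrudes here.

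To finish I would take a kernel element of $Y(z)$ of minimal length $\sum_{i=1}^n u^i \otimes v^i$, so that $\{u^i\}$ and $\{v^i\}$ are each linearly independent and $n \geq 1$. Applying density to the independent family $\{u^i\}$, I choose $a \in A$ with $a u^1 = u^1$ and $a u^i = 0$ for $i \geq 2$; feeding this into the engine collapses the relation to the single term $Y(u^1, z)v^1 = 0$ with $u^1, v^1 \neq 0$. Running the engine once more on this length-one relation gives $Y(a u^1, z)v^1 = 0$ for all $a \in A$; but $u^1 \neq 0$ and simplicity of $|V|$ force $A u^1 = |V|$, so $Y(w, z)v^1 = 0$ for every $w \in V$. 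Specializing $w = \mathbf{1}$ gives $v^1 = Y(\mathbf{1}, z)v^1 = 0$, a contradiction. Hence $\ker Y(z) = 0$.

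The main obstacle is packaged into the two inputs rather than into the endgame: first Lemma \ref{simple-AVD}, which secures simplicity of $|V|$ as an $A$-module despite the possible presence of nonhomogeneous ideals in the super setting (the genuinely new point over the vertex-algebra case), and second the countable-dimensionality hypothesis, which is exactly what collapses the commutant to $\mathbb{C}$ and thereby licenses the density theorem. Once these are in place, the associativity-driven stability of the kernel together with simplicity of the module makes the contradiction nearly automatic; the only care required is to confirm that the weak associativity relation and the translation identity used in the engine hold verbatim, without super signs, for vertex superalgebras.
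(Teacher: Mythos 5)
Your proposal is correct and follows essentially the same route as the paper: Lemma \ref{simple-AVD} to get simplicity of $|V|$ as an $A(V,\mathcal{D})$-module, weak associativity plus Proposition \ref{Der}(1) to show $\ker Y(z)$ is stable under the $A(V,\mathcal{D})$-action on the left tensor factors, and Jacobson density to derive a contradiction. The only differences are cosmetic: you spell out the Dixmier--Schur step that makes the commutant $\C$ (which is where countable dimension is actually used, and which the paper leaves implicit), and your endgame uses two applications of density where the paper chooses $a$ with $av^1=\mathbf{1}$, $av^i=0$ and concludes in one step.
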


\begin{proof}
The proof is now similar to that of  \cite[Proposition 4.3]{DRY2}.	
Suppose that the linear map $Y(z)$ is not injective.
Then there exists  a nonzero vector
$v^1 \otimes w^1 + \cdots + v^s \otimes w^s$
in the kernel of $Y(z)$,
where $s$ is a positive integer, $v^1, \cdots, v^s$ are linearly independent,
and $w^1 , \cdots , w^s$ are nonzero.
That is, we have $$Y(v^1, z)w^1+ \cdots + Y(v^s,z)w^s=0.$$
By weak associativity, for any $u \in V$, there exists some $k \in \N$ such that
\begin{align*}
& (z+z_0)^k(Y(Y(u, z_0)v^1, z)w^1 + \cdots + Y(Y(u, z_0)v^s, z)w^s)\\
&=(z_0+z)^k(Y(u, z_0+z)Y(v^1,z)w^1 + \cdots + Y(u, z_0+z)Y(v^s,z)w^s)\\
&=0,
\end{align*}
which implies that
\begin{align*}
Y(Y(u, z_0)v^1, z)w^1 + \cdots + Y(Y(u, z_0)v^s, z)w^s=0.
\end{align*}
On the other hand, we have
\begin{align*}
& Y(\mathcal{D} v^1, z)w^1+ \cdots + Y(\mathcal{D} v^s,z)w^s \\
& =\frac{d}{dz}(Y(v^1, z)w^1+ \cdots + Y(v^s,z)w^s)\\
&=0.
\end{align*}
Therefore, for any $a \in A(V, \mathcal{D})$, we have
$$Y(av^1, z)w^1+ \cdots + Y(av^s,z)w^s=0.$$
Since $|V|$ is an irreducible $ A(V, \mathcal{D})$-module (see Lemma \ref{simple-AVD}), and  $v^1, \cdots, v^s$ are linearly independent,
by Jacobson density theorem there exists $a \in  A(V, \mathcal{D})$ such that
$av^1={\bf 1}$ and $av^i=0$ for any $i \neq 1$.
It follows that $Y({\bf 1}, z)w^1=0$, which is a contradiction.
Hence $Y(z)$ is injective and the proof is complete.
\end{proof}

\section{$Y(z)$-injectivity for  vertex superalgebras with PBW basis}

In this section, we will show that every $\frac{1}{T}\mathbb{N}$-graded vertex superalgebra with a PBW basis is $Y(z)$-injective. 
Our approach is motivated by the non-degeneracy arguments developed for quantum vertex algebras with PBW bases in \cite{Li1}.

Let $T$ be a positive integer, and let $V$ be a $\frac{1}{T}\mathbb{N}$-graded vertex superalgebra. Assume that $V$ is strongly generated by a $\frac{1}{T}\mathbb{N}$-graded subsuperspace $U$. For $p \in \frac{1}{T}\mathbb{N}$, let $E_p(V)$ denote the linear subsuperspace of $V$ spanned by vectors of the form  
\[
u^1_{-n_1} \cdots u^r_{-n_r} \mathbf{1},
\]  
where $r \geq 0$, $n_i \geq 1$, and $u^1, \dots, u^r$ are $(\Z_2 \times \frac{1}{T}\mathbb{N})$-homogeneous elements of $U$ satisfying  
\[
\deg u^1 + \cdots + \deg u^r \leq p.
\]  
Similar to \cite{A, Li2, Li3}, we have the following statements:
\begin{enumerate}[{(1)}]
\item ${\bf 1} \in E_0(V)$;
\item $E_p(V) \subset E_q(V)$ for \ $0 \leq p < q$;
\item $V=\cup_{p \in \frac{1}{T}\N} E_p(V)$;
\item $\mathcal{D}E_p(V) \subset E_p(V)$ \ for any $p$;
\item $u_nE_q(V) \subset E_{p+q}(V)$ \ for $u \in E_p(V)$, $n \in \Z$;
\item $u_nE_q(V) \subset E_{p+q-1}(V)$ \ for $u \in E_p(V)$, $n \in \N$.
\end{enumerate}
Define 
$$\text{gr}_E(V)=\bigoplus_{p \in \frac{1}{T}\N}E_p(V)/E_{p-\frac{1}{T}}(V).$$
Here and below,  $E_{n}(V)=0 \ \text{if} \ n<0$.

We note that $\text{gr}_E(V)$ inherits a natural superspace structure from $V$.
It follows from (1) to (6) above that 
$\text{gr}_E(V)$ forms a commutative  vertex superalgebra with the vacuum vector ${\bf 1}+E_{-\frac{1}{T}}(V)$,
whoses vertex operator map is uniquely determined by the n-products:
\[
\bigl(u + E_{p - \frac{1}{T}}(V)\bigr)_n \bigl(v + E_{q - \frac{1}{T}}(V)\bigr) = u_n v + E_{p + q - \frac{1}{T}}(V)
\]
for $u, v \in V$, $p,q \in \frac{1}{T}\N$, and $n \in \Z$.

\begin{lemma}\label{Yz-1}
Let $V$ be a $\frac{1}{T}\N$-graded vertex superalgebra. 
Assume that  $\text{gr}_E(V)$ is a $Y(z)$-injective vertex superalgebra. 
Then $V$ is also $Y(z)$-injective. 
\end{lemma}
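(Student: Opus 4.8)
The plan is to regard $Y(z)$ as a morphism of filtered vector spaces and to invoke the elementary principle that a filtered linear map whose associated graded map is injective is itself injective. First I would equip $V\otimes V$ with the tensor filtration $\mathcal{F}_p(V\otimes V)=\sum_{a+b\le p}E_a(V)\otimes E_b(V)$ for $p\in\frac{1}{T}\N$, and equip $V(\!(z)\!)$ with the filtration whose $p$-th piece is $E_p(V)(\!(z)\!)$. By property (3) both filtrations are exhaustive, and by the convention $E_n(V)=0$ for $n<0$ they are bounded below; since the index set $\frac{1}{T}\N$ is discrete and well-ordered, every nonzero vector has a well-defined minimal filtration degree and a nonzero leading symbol. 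The key input is property (5): for $u\in E_a(V)$ and $v\in E_b(V)$ every coefficient $u_nv$ lies in $E_{a+b}(V)$, so $Y(u,z)v\in E_{a+b}(V)(\!(z)\!)$, and therefore $Y(z)\bigl(\mathcal{F}_p(V\otimes V)\bigr)\subseteq E_p(V)(\!(z)\!)$. Thus $Y(z)$ is a filtered map.

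Next I would identify the associated graded. Because we work over the field $\C$, the $E_\bullet$-filtration of $V$ splits, which yields $\operatorname{gr}_p(V\otimes V)\cong\bigoplus_{a+b=p}(\operatorname{gr}_E V)_a\otimes(\operatorname{gr}_E V)_b$, i.e. $\operatorname{gr}(V\otimes V)\cong\operatorname{gr}_E(V)\otimes\operatorname{gr}_E(V)$; and on the target $\operatorname{gr}_p\bigl(V(\!(z)\!)\bigr)\cong(\operatorname{gr}_E V)_p(\!(z)\!)$, since passing to the quotient $E_p(V)/E_{p-\frac{1}{T}}(V)$ commutes with forming Laurent series (the lower truncation is preserved coefficientwise). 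Under these identifications the induced map $\operatorname{gr}_pY(z)$ sends the class of $u\otimes v$, with $u\in E_a(V)$, $v\in E_b(V)$, $a+b=p$, to $\sum_n\bigl(u_nv+E_{p-\frac{1}{T}}(V)\bigr)z^{-n-1}$, which by the defining formula for the $n$-products of $\operatorname{gr}_E(V)$ is exactly $Y^{\operatorname{gr}_E V}$ applied to the corresponding homogeneous classes. Hence $\bigoplus_p\operatorname{gr}_pY(z)$ is precisely the structure map $Y^{\operatorname{gr}_E(V)}(z)$.

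To conclude, note that $Y^{\operatorname{gr}_E(V)}(z)$ is injective by hypothesis, so each graded component $\operatorname{gr}_pY(z)$, being a restriction of an injection, is injective. If $Y(z)$ had a nonzero kernel vector $\xi=\sum_i v^i\otimes w^i$, let $p$ be its minimal filtration degree; then its leading symbol $\sigma_p(\xi)\in\operatorname{gr}_p(V\otimes V)$ is nonzero, yet $\operatorname{gr}_pY(z)\bigl(\sigma_p(\xi)\bigr)=\sigma_p\bigl(Y(z)\xi\bigr)=\sigma_p(0)=0$, forcing $\sigma_p(\xi)=0$, a contradiction. Therefore $Y(z)$ is injective.

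I expect the only nonformal step to be the identification $\operatorname{gr}(V\otimes V)\cong\operatorname{gr}_E(V)\otimes\operatorname{gr}_E(V)$ together with the verification that $\operatorname{gr}Y(z)$ genuinely reproduces the vertex operator map of $\operatorname{gr}_E(V)$. This is where one must use that the ground field is $\C$ (so the filtration splits into compatible graded complements) and keep careful track of the $\frac{1}{T}\N$-grading, including the elementary but necessary fact that taking the quotient $E_p/E_{p-\frac{1}{T}}$ commutes with $(\!(z)\!)$. Once these bookkeeping points are settled, the remainder is the standard leading-symbol argument and requires no restriction on the dimension of $V$.
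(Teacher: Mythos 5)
Your proof is correct, and it follows the same basic strategy as the paper: transfer the problem to $\operatorname{gr}_E(V)$ and invoke its $Y(z)$-injectivity. The difference lies in the execution. The paper chooses complements $L_p$ of $E_{p-\frac{1}{T}}(V)$ in $E_p(V)$, writes a putative kernel element as $\sum_i u^i\otimes v^i$ with $u^i\in L_{p_i}$ and $v^i\in L_{q_i}$, and asserts that the \emph{entire} image $\sum_i Y_{\operatorname{gr}_E(V)}(\bar{u}^i,z)\bar{v}^i$ vanishes in $\operatorname{gr}_E(V)$. Strictly speaking, the relation $\sum_i(u^i)_nv^i=0$ only forces the component of that sum in the top graded piece $E_r(V)/E_{r-\frac{1}{T}}(V)$, with $r=\max_i(p_i+q_i)$, to vanish: cancellations between terms of different total degree $p_i+q_i$ are not visible in $\operatorname{gr}_E(V)\otimes\operatorname{gr}_E(V)$, so the lower-degree components need not be zero. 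Your leading-symbol formulation uses only that top component, which is exactly what is needed, so your argument is the rigorous version of the paper's. The two identifications you single out as the nonformal steps, $\operatorname{gr}(V\otimes V)\cong\operatorname{gr}_E(V)\otimes\operatorname{gr}_E(V)$ and $\operatorname{gr}_p\bigl(V(\!(z)\!)\bigr)\cong(\operatorname{gr}_E V)_p(\!(z)\!)$, are indeed the only bookkeeping points, and both hold for the reasons you give (the filtration splits over $\C$, and lifting coefficients preserves the lower truncation), so no restriction on $\dim V$ is needed.
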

\begin{proof}
For  each $p \in  \frac{1}{T}\N$ , 
let $L_p$ be a complement of the $E_{p-\frac{1}{T}}(V)$ in $E_{p}(V)$, so that
 $$E_p(V)=E_{p-\frac{1}{T}}(V) \oplus L_p \  \ \ \  \text{and} \ \ \ \ V= \oplus_{p \in \frac{1}{T}\N} L_p.$$

Assume that the map $Y(z)$ is not injective.
Then there exists a nonzero element $u^1\otimes v^1 + \cdots + u^n \otimes v^n \in \text{Ker} (Y(z)) $ for some positive integer $n$, 
where $u_1, u_2, \cdots, u_n$ are linearly independent elements in subspaces $L_{p_1}, L_{p_2}, \cdots, L_{p_n}$ 
with  indices  $p_1 \geq p_2 \geq \cdots \geq p_n \geq 0 $, 
and $v_1, v_2, \cdots, v_n$ are nonzero elements in subspaces $L_{q_1}, L_{q_2}, \cdots, L_{q_n}$ 
with  indices  $q_1  \geq q_2 \geq \cdots \geq q_n \geq 0$.
The construction of $L_p$ and the selection of $u_i \in L_{p_i}$ and $v_i \in L_{q_i}$ ensure that 
$$(u^1 +E_{p_1-\frac{1}{T}}(V)) \otimes (v^1 +E_{q_1-\frac{1}{T}}(V)) + \cdots +(u^n +E_{p_n-\frac{1}{T}}(V)) \otimes (v^n +E_{q_n-\frac{1}{T}}(V))$$
is a nonzero element in $\text{gr}_E(V)$.
On the other hand, since $u^1\otimes v^1 + \cdots + u^n \otimes v^n \in \text{Ker} (Y(z))$, we have
\begin{align*}
& Y_{\text{gr}_E(V)}(u^1 +E_{p_1-\frac{1}{T}}(V), z)(v^1 +E_{q_1-\frac{1}{T}}(V))+ \cdots \\
& + Y_{\text{gr}_E(V)}(u^n +E_{p_n-\frac{1}{T}}(V),z)(v^n +E_{q_n-\frac{1}{T}}(V))=0, 
\end{align*}
which contradicts the fact that $\text{gr}_E(V)$ is $Y(z)$-injective.
Therefore, the linear map $Y(z)$ is injective,  completing the proof.

\end{proof}

For convenience, we adopt the following definition.
\begin{definition}
	Let $V$ be a $\frac{1}{T}\N$-graded vertex superalgebra. 
	We say that $V$ {\em  admits a PBW basis } if there exists a vector superspace $\mathfrak{h}$ 
	such that $\text{gr}_E(V)$ is isomorphic to $(\mathcal{F}(\mathfrak{h}), \partial)$ as commutative vertex superalgebras.
\end{definition}

\begin{theorem} \label{main-Y(z)}
Every vertex superalgebra admitting a PBW basis is $Y(z)$-injective.
\end{theorem}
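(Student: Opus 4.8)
The plan is to reduce the statement to the single free model $(\mathcal{F}(\mathfrak{h}),\partial)$ and then to a classical differential-algebra fact. By the definition of a PBW basis, $\mathrm{gr}_E(V)\cong(\mathcal{F}(\mathfrak{h}),\partial)$ as commutative vertex superalgebras, and since $Y(z)$-injectivity is visibly an isomorphism invariant, Lemma~\ref{Yz-1} reduces everything to proving that $(\mathcal{F}(\mathfrak{h}),\partial)$ itself is $Y(z)$-injective. Here the vertex operator is $Y(a,z)b=(e^{z\partial}a)b$, and since $\partial$ is an even derivation, $e^{z\partial}\colon\mathcal{F}(\mathfrak{h})\to\mathcal{F}(\mathfrak{h})[[z]]$ is an (even) algebra homomorphism with $e^{z\partial}a\equiv a\ (\mathrm{mod}\ z)$, hence injective. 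Writing $A=\mathcal{F}(\mathfrak{h})$, I would argue by contradiction: if $Y(z)$ had a nonzero kernel, then---using that $Y(z)$ is homogeneous for both the conformal $\tfrac1T\N$-grading (assigning $\deg z=-1$) and the polynomial degree in the strong generators---I could choose a homogeneous relation $\sum_{j=1}^{s}(e^{z\partial}a^j)b^j=0$ with $a^1,\dots,a^s$ linearly independent over $\C$ and all $b^j\neq0$. Extracting the coefficient of $z^k$ turns this into the infinite system $\sum_{j=1}^{s}(\partial^k a^j)\,b^j=0$ for every $k\ge0$.

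The heart is to show this system forces $b^j=0$. When $\mathfrak{h}$ is purely even, $A$ is the differential polynomial ring $\C[\partial^k x_i:i,\ k\ge0]$, an integral domain, and $K=\mathrm{Frac}(A)$ is a differential field whose field of constants $\ker(\partial)$ is exactly $\C$. The Wronskian criterion then says that the $\C$-linear independence of $a^1,\dots,a^s$ is equivalent to $\mathrm{Wr}(a^1,\dots,a^s)=\det(\partial^{k}a^j)_{0\le k\le s-1}\neq0$; equivalently, the columns $(\partial^k a^j)_{k\ge0}$ are $K$-linearly independent. Reading $\sum_j(\partial^k a^j)b^j=0$ $(k\ge0)$ as a homogeneous linear system over $K$ with coefficient matrix of full column rank, I conclude $b^j=0$ for all $j$, a contradiction. (The same step can be packaged as a separation-of-variables identity $\sum_j\overline{a^j}(t{+}z)\,\overline{b^j}(t)=0$ obtained from a differential specialization $A\to\C[[t]]$, $\partial\mapsto d/dt$.) This settles the even case and, a fortiori, recovers every vertex algebra with a PBW basis.

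The main obstacle is the genuinely super phenomenon: once $\mathfrak{h}_{\bar 1}\neq0$, the odd generators are nilpotent, so $A$ is no longer a domain, $\mathrm{Frac}(A)$ does not exist, and the Wronskian criterion is unavailable. This is exactly the new difficulty absent from the vertex-algebra treatments of \cite{Li1,DRY2}. To overcome it I would reduce the odd variables to the even case rather than attack them directly: after further splitting the relation into pieces homogeneous in exterior degree, pass to a Grassmann/functor-of-points envelope---tensor $A$ with an auxiliary exterior algebra and replace each odd generator by the product of an odd constant with a fresh even indeterminate---so that the relevant even-element identities $\sum_j(\partial^k a^j)b^j=0$ are transported into a purely even differential polynomial domain, where the argument of the previous paragraph applies and then descends back. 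Alternatively one can run a leading-term/filtration argument in the spirit of \cite{Li1}, carefully tracking the signs coming from supercommutativity. I expect verifying that this odd-to-even reduction is faithful---that it loses neither the linear independence of the $a^j$ nor the nonvanishing of the $b^j$---to be the principal technical point of the proof.
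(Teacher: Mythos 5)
Your reduction via Lemma~\ref{Yz-1} to the free model $(\mathcal{F}(\mathfrak{h}),\partial)$ matches the paper, and your Wronskian argument for purely even $\mathfrak{h}$ is sound: the free differential polynomial ring is a domain, the constants of its fraction field are $\C$, and the nonvanishing Wronskian of $\C$-linearly independent $a^1,\dots,a^s$ kills the system $\sum_j(\partial^k a^j)b^j=0$. But that case is exactly the part already covered by \cite{Li1} and \cite{DRY2}; the entire new content of this theorem is the case $\mathfrak{h}_{\bar 1}\neq 0$, and there your proposal stops at a plan. You acknowledge this yourself (``I expect verifying that this odd-to-even reduction is faithful\dots to be the principal technical point''), so the proof is incomplete precisely where it needs to be new. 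Worse, the specific device you sketch does not work as stated: if you replace an odd generator's modes $y(-n)$ by $\theta\,t_n$ with a single odd Grassmann constant $\theta$ and even indeterminates $t_n$, then $y(-n)y(-m)\mapsto -\theta^2 t_nt_m=0$, whereas $y(-n)y(-m)\neq 0$ in $\mathcal{F}(\mathfrak{h})$ for $n\neq m$; so the specialization is not injective and cannot detect nonvanishing of the $b^j$. Using a separate odd constant $\theta_n$ for each mode breaks compatibility with $\partial$. Some genuine argument for the exterior part is required, and none is supplied.

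For comparison, the paper avoids this difficulty entirely by a different route: for finite-dimensional $\mathfrak{h}$ it embeds $(\mathcal{F}(\mathfrak{h}),\partial)$ as a vertex subsuperalgebra of the Heisenberg vertex superalgebra $V_{\widetilde H}(1,0)$ built on $\mathfrak{h}$ together with a dual copy carrying a nondegenerate even supersymmetric form; that algebra is simple, hence $Y(z)$-injective by Theorem~\ref{S-YS} (whose proof runs through Lemma~\ref{simple-AVD} and the Jacobson density theorem), and Lemma~\ref{sub-Y(z)} passes injectivity down to the subalgebra; infinite-dimensional $\mathfrak{h}$ is then handled by a finiteness reduction identical in spirit to your Case~2. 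This treats even and odd generators uniformly and never needs a domain, a fraction field, or a Wronskian. If you want to salvage your approach, you must either carry out an honest super-Wronskian/filtration argument for the exterior factor or simply adopt the embedding-into-a-simple-algebra strategy; as written, the theorem is not proved.
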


\begin{proof}

Lemma \ref{Yz-1} reduces the proof to showing that $(\mathcal{F}(\mathfrak{h}), \partial)$ is $Y(z)$-injective for any vector superspace $\mathfrak{h}$.

\medskip\noindent
\textbf{Case 1: Finite-dimensional $\mathfrak{h}$.} 
Assume $\mathfrak{h}$ is finite-dimensional. By Lemma \ref{sub-Y(z)}, 
it suffices to embed $\mathcal{F}(\mathfrak{h})$ as a vertex subsuperalgebra into a simple vertex superalgebra of countable dimension.

Choose a basis  $\{e_1, e_2, \cdots, e_s\}$ for $\mathfrak{h}_{\bar 0}$ and a basis  $\{f_1, f_2, \cdots, f_t\}$ for $\mathfrak{h}_{\bar 1}$.
Let $\overline{\mathfrak{h}}_{\bar 0}$ be the vector space with a basis $\{\bar{e}_1, \bar{e}_2,  \cdots, \bar{e}_s\}$,
and let $\overline{{\mathfrak{h}}}_{\bar 1}$ be the vector space with a basis $\{\bar{f}_1, \bar{f}_2,  \cdots, \bar{f}_t\}.$
Construct the commutative Lie superalgebra
$H=\mathfrak{h}_{\bar 0} \oplus \overline{\mathfrak{h}}_{\bar 0} \oplus \mathfrak{h}_{\bar 1} \oplus \overline{{\mathfrak{h}}}_{\bar 1}$
with even part $H_{\bar 0}= \mathfrak{h}_{\bar 0} \oplus \overline{\mathfrak{h}}_{\bar 0}$
and odd part $H_{\bar 1}= \mathfrak{h}_{\bar 1} \oplus  \overline{{\mathfrak{h}}}_{\bar 1}$.
Equip $H$ with a nondegenerate even supersymmetric bilinear form $( \ , )$:
$$(H_{\bar 0}, \ H_{\bar 1})=(H_{\bar 1}, \ H_{\bar 0})=0,$$
$$(e_i, \bar{e}_j)=(\bar{e}_j, e_i)=\delta_{i,j}, \ \ \ (e_i, e_j)=(\bar{e}_i, \bar{e}_j)=0,$$
$$(f_i, \bar{f}_j)=-(\bar{f}_j, f_i)=\delta_{i,j}, \ \ \ (f_i, f_j)=(\bar{f}_i, \bar{f}_j)=0,$$
for any $i, j$, where $\delta_{i,j}$ is the Kronecker delta.

Since the bilinear form $(\ , )$ is nondegenerate,
the Heisenberg vertex superalgebra  $V_{\widetilde{H}}(1,0)$ constructed in Example \ref{Ex-aff} is simple (see, for example, \cite{LL,K}).
By Theorem \ref{S-YS}, this simplicity implies $Y(z)$-injectivity of $V_{\widetilde{H}}(1,0)$.
Let $V(\mathfrak{h})$ be the vertex subsuperalgebra of $V_{\widetilde{H}}(1,0)$ generated by $h(-1){\bf 1}$, for $h \in \mathfrak{h}$.
The orthogonality  condition $(\mathfrak{h}, \mathfrak{h}) \equiv 0$ forces $V(\mathfrak{h})$ to be a commutative vertex superalgebra.
Furthermore, it is easy to see that the vertex superalgebra $V(\mathfrak{h})$ and $(\mathcal{F}(\mathfrak{h}), \partial)$ are isomorphic.
Therefore,  $\mathcal{F}(\mathfrak{h})$ is $Y(z)$-injective when $\mathfrak{h}$ is a finite-dimensional vector superspace. 

\medskip\noindent
\textbf{Case 2: Arbitrary-dimensional $\mathfrak{h}$.} 
We now establish $Y(z)$-injectivity for $\mathfrak{h}$ of arbitrary dimension. Suppose $\sum_{i=1}^n u^i \otimes v^i \in \ker Y(z) \subseteq \mathcal{F}(\mathfrak{h}) \otimes \mathcal{F}(\mathfrak{h})$.

There exists a finite-dimensional supersubspace $W \subseteq \mathfrak{h}$ such that all $u^i, v^j$ lie in the vertex subsuperalgebra $U \subseteq \mathcal{F}(\mathfrak{h})$ generated by $\{w(-1) \mid w \in W\}$. 
Since $U \cong \mathcal{F}(W)$ and $W$ is finite-dimensional, Case 1 implies $U$ is $Y(z)$-injective. Hence $\sum_{i=1}^n u^i \otimes v^i = 0$ in $U \otimes U$, and consequently in $\mathcal{F}(\mathfrak{h}) \otimes \mathcal{F}(\mathfrak{h})$. This proves $Y(z)$-injectivity of $\mathcal{F}(\mathfrak{h})$.

\medskip\noindent
The conclusion follows from Cases 1 and 2.

\end{proof}

As a direct application of Theorem \ref{main-Y(z)}, we establish the $Y(z)$-injectivity for the following classes of vertex superalgebras:
\begin{enumerate}[(i)]
	\item Tensor products of those admitting PBW bases,
	\item Affine vertex superalgebras,
	\item Neveu-Schwarz vertex superalgebras.
\end{enumerate}

\begin{corollary}
	
Let $V$ and $U$ be $\frac{1}{T}\mathbb{N}$-graded vertex superalgebras admitting PBW bases. 
Then the tensor product vertex superalgebra $V \otimes U$ also admits a PBW basis. Consequently, $V \otimes U$ is $Y(z)$-injective.
\end{corollary}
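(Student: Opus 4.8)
The plan is to prove the statement in two parts: first that $V \otimes U$ admits a PBW basis, and then to invoke Theorem \ref{main-Y(z)} to conclude $Y(z)$-injectivity. The second part is immediate once the first is established, so the entire burden rests on constructing the PBW basis for the tensor product. The strategy is to show that the filtration $E_\bullet(V \otimes U)$ is compatible with the tensor product of the filtrations on $V$ and $U$, so that $\operatorname{gr}_E(V \otimes U)$ is isomorphic, as a commutative vertex superalgebra, to $\operatorname{gr}_E(V) \otimes \operatorname{gr}_E(U)$.

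First I would fix the data. Suppose $V$ is strongly generated by a graded subsuperspace $U_V$ and $U$ by $U_U$, with $\operatorname{gr}_E(V) \cong (\mathcal{F}(\mathfrak{h}_1), \partial)$ and $\operatorname{gr}_E(U) \cong (\mathcal{F}(\mathfrak{h}_2), \partial)$ for vector superspaces $\mathfrak{h}_1, \mathfrak{h}_2$. The tensor product $V \otimes U$ carries the usual tensor grading $\deg(a \otimes b) = \deg a + \deg b$ and is strongly generated by the subsuperspace $(U_V \otimes \mathbf{1}) \oplus (\mathbf{1} \otimes U_U)$. Next I would identify the filtration: I claim $E_p(V \otimes U) = \sum_{r + s \le p} E_r(V) \otimes E_s(U)$, which follows because a spanning monomial built from the generators of $V \otimes U$ splits, using the tensor-product vertex operator formula, into a product of a monomial in the $V$-factor and one in the $U$-factor, with total degree bounded by the sum of the two partial degrees. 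Granting this description of the filtration, the associated graded decomposes as $\operatorname{gr}_E(V \otimes U) \cong \operatorname{gr}_E(V) \otimes \operatorname{gr}_E(U) \cong \mathcal{F}(\mathfrak{h}_1) \otimes \mathcal{F}(\mathfrak{h}_2)$.

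The final identification I need is that $\mathcal{F}(\mathfrak{h}_1) \otimes \mathcal{F}(\mathfrak{h}_2) \cong \mathcal{F}(\mathfrak{h}_1 \oplus \mathfrak{h}_2)$ as commutative differential superalgebras, hence as commutative vertex superalgebras. This is the statement that the free commutative superalgebra construction of Example \ref{exam1} is monoidal: the universal enveloping algebra of the commutative Lie superalgebra $(\mathfrak{h}_1 \oplus \mathfrak{h}_2) \otimes t^{-1}\C[t^{-1}]$ is the tensor product of the enveloping algebras of $\mathfrak{h}_1 \otimes t^{-1}\C[t^{-1}]$ and $\mathfrak{h}_2 \otimes t^{-1}\C[t^{-1}]$, and the derivation $\partial$ on the sum restricts to the given $\partial$ on each summand. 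Setting $\mathfrak{h} = \mathfrak{h}_1 \oplus \mathfrak{h}_2$ then exhibits $\operatorname{gr}_E(V \otimes U) \cong (\mathcal{F}(\mathfrak{h}), \partial)$, so $V \otimes U$ admits a PBW basis, and Theorem \ref{main-Y(z)} yields $Y(z)$-injectivity.

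The main obstacle I anticipate is verifying the filtration identity $E_p(V \otimes U) = \sum_{r+s\le p} E_r(V) \otimes E_s(U)$ cleanly, specifically the reverse containment and the compatibility of the $n$-products across the tensor factor. One must check that the vertex operator $Y_{V\otimes U}(a \otimes b, z)(c \otimes d) = (-1)^{|b||c|} Y_V(a,z)c \otimes Y_U(b,z)d$ respects the filtration degrees additively and that the induced $n$-products on the associated graded coincide with those of the tensor product of $\operatorname{gr}_E(V)$ and $\operatorname{gr}_E(U)$; the sign factor $(-1)^{|b||c|}$ must be tracked carefully so that the resulting isomorphism is an isomorphism of \emph{super} vertex algebras and not merely of the underlying ungraded objects. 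Once these bookkeeping points are settled, the argument is routine.
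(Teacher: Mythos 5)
Your proposal is correct and follows essentially the same route as the paper: identify the strong generating set $(A\otimes \mathbf{1})+(\mathbf{1}\otimes B)$, establish $E_n(V\otimes U)=\sum_{i+j=n}E_i(V)\otimes E_j(U)$, deduce $\operatorname{gr}_E(V\otimes U)\cong \mathcal{F}(\mathfrak{h})\otimes\mathcal{F}(\mathfrak{n})\cong\mathcal{F}(\mathfrak{h}\oplus\mathfrak{n})$, and apply Theorem \ref{main-Y(z)}. The bookkeeping points you flag (the reverse containment of the filtration identity and the sign in the tensor-product vertex operator) are exactly the details the paper leaves as routine.
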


\begin{proof}
Note that the tensor product vertex superalgebra $V \otimes U$ is $\frac{1}{T}\mathbb{N}$-graded with
$$(V \otimes U)_n = \bigoplus_{i+j=n} V_i \otimes U_j$$
for any $n \in \frac{1}{T}\mathbb{N}$. Assume $V$ is strongly generated by $A \subseteq V$, and $U$ by $B \subseteq U$. Suppose further that $\operatorname{gr}_E(V) \cong \mathcal{F}(\mathfrak{h})$ and $\operatorname{gr}_E(U) \cong \mathcal{F}(\mathfrak{n})$ for some vector superspaces $\mathfrak{h}$ and $\mathfrak{n}$.
	
Then $V \otimes U$ is strongly generated by  $A \otimes {\bf 1} + {\bf 1} \otimes B$.
From the definition of filtration, we immediately obtain
$$E_n(V \otimes U) = \sum_{i+j=n} E_i(V) \otimes E_j(U)$$
for any $n \in \frac{1}{T}\mathbb{N}$. Therefore, we have the following isomorphism of commutative vertex superalgebras:
$$\operatorname{gr}_E(V \otimes U) \cong \operatorname{gr}_E(V) \otimes \operatorname{gr}_E(U) \cong \mathcal{F}(\mathfrak{h}) \otimes \mathcal{F}(\mathfrak{n}) \cong \mathcal{F}(\mathfrak{h} \oplus \mathfrak{n}).$$
Consequently, $V \otimes U$ admits a PBW basis. This completes the proof.
	
\end{proof}

\begin{remark}
It is shown in \cite{Li2} that the tensor product of nondegenerate nonlocal vertex algebras remains nondegenerate.
However, without the additional assumption of a PBW basis, the tensor product typically fails to preserve the $Y(z)$-injectivity.

\end{remark}

\begin{corollary}
Let $\mathfrak{g}$ be a finite-dimensional Lie superalgebra equipped  with an even supersymmetric invariant bilinear form $(\ , )$.
For any complex number $k$, the affine vertex superalgebra $V_{\widetilde{\mathfrak{g}}}(k, 0)$ constructed  in Example \ref{Ex-aff} is $Y(z)$-injective. 
\end{corollary}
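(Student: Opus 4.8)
The plan is to invoke Theorem \ref{main-Y(z)}, so that it suffices to show that $V := V_{\widetilde{\mathfrak{g}}}(k,0)$ admits a PBW basis in the sense of the preceding definition, i.e.\ that $\operatorname{gr}_E(V) \cong (\mathcal{F}(\mathfrak{g}), \partial)$ as commutative vertex superalgebras. First I would record the relevant grading data ($T=1$): by Example \ref{Ex-aff}, $V$ is strongly generated by the degree-one subsuperspace $U = \{\,x(-1)\mathbf{1} : x \in \mathfrak{g}\,\}$, whose $\Z_2$-grading is inherited from $\mathfrak{g}$, and which I identify with $\mathfrak{g}$ via $x \mapsto x(-1)\mathbf{1}$. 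Since every generator has degree $1$, the filtration index $\deg u^1 + \cdots + \deg u^r$ equals the number $r$ of factors, so (using $(x(-1)\mathbf{1})_{-n} = x(-n)$) the space $E_p(V)$ is spanned by the monomials $x^{i_1}(-n_1)\cdots x^{i_r}(-n_r)\mathbf{1}$ with $r \leq p$ and each $n_j \geq 1$.

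Next I would construct the comparison map. Because $\operatorname{gr}_E(V)$ is a commutative vertex superalgebra, equivalently a commutative differential superalgebra with derivation $\mathcal{D}$, the freeness of $(\mathcal{F}(\mathfrak{g}), \partial)$ recorded in Example \ref{exam1} yields a homomorphism of commutative differential (hence vertex) superalgebras
\[
\Phi \colon \mathcal{F}(\mathfrak{g}) \longrightarrow \operatorname{gr}_E(V)
\]
determined by $x(-1) \mapsto x(-1)\mathbf{1} + E_0(V)$ for $x \in \mathfrak{g}$. Since $\partial(h(-n)) = n\,h(-n-1)$ matches $\mathcal{D}(x(-n)\mathbf{1}) = n\,x(-n-1)\mathbf{1}$, the map $\Phi$ carries the generator $x(-n)$ to the class of $x(-n)\mathbf{1}$, and more generally a PBW monomial of $\mathcal{F}(\mathfrak{g})$ to the class of the corresponding monomial of $V$. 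Surjectivity of $\Phi$ is then immediate from the strong generation of $V$ by $U$, as every element of $\operatorname{gr}_E(V)$ is a supercommutative product of $\mathcal{D}$-derivatives of the classes of the $x(-1)\mathbf{1}$.

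For injectivity I would invoke the PBW theorem for the Lie superalgebra $\widetilde{\mathfrak{g}}_- = \mathfrak{g} \otimes t^{-1}\C[t^{-1}]$. Fixing a homogeneous ordered basis $b_1, \dots, b_m$ of $\mathfrak{g}$, the ordered monomials in the $b_i(-n)$ (polynomial in the even generators, exterior in the odd ones) applied to $\mathbf{1}$ form a basis of $V \cong \mathcal{U}(\widetilde{\mathfrak{g}}_-)$, and a monomial with $r$ factors lies in $E_r(V)$ but not in $E_{r-1}(V)$; hence their classes form a basis of $\operatorname{gr}_E(V)$. The superalgebra $\mathcal{F}(\mathfrak{g}) = \mathcal{U}(\mathfrak{g}\otimes t^{-1}\C[t^{-1}])$ has a PBW basis indexed by exactly the same ordered monomials, and by the previous paragraph $\Phi$ sends each such basis monomial to the class of the corresponding monomial of $V$. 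Thus $\Phi$ maps a basis bijectively onto a basis, so it is an isomorphism, giving $\operatorname{gr}_E(V) \cong \mathcal{F}(\mathfrak{g})$ and hence the desired PBW basis.

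I expect the main obstacle to be the injectivity step: one must verify carefully that the supersymmetric PBW ordering (polynomial on even, exterior on odd generators) is respected on both sides and that the filtration by number of factors is \emph{strict} on PBW monomials. Concretely, the commutator and central terms of $[x(m),y(n)] = [x,y](m+n) + m\delta_{m+n,0}(x,y)K$ are the source of lower-order corrections, and one needs that these genuinely drop the filtration index, which is what makes $\Phi$ both well defined and strictly degree-preserving. Once this bookkeeping is in place, the conclusion that $V_{\widetilde{\mathfrak{g}}}(k,0)$ is $Y(z)$-injective follows at once from Theorem \ref{main-Y(z)}.
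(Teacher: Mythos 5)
Your proposal is correct and follows essentially the same route as the paper: both arguments take $U=\mathfrak{g}\otimes t^{-1}$ as the strongly generating subspace, use the PBW theorem for $\mathcal{U}(\mathfrak{g}\otimes t^{-1}\C[t^{-1}])$ to identify $\operatorname{gr}_E(V_{\widetilde{\mathfrak{g}}}(k,0))$ with $\mathcal{F}(\mathfrak{g})$ as commutative vertex superalgebras (with the filtration degree given by the number of factors, and the commutator/central terms dropping that degree), and then invoke Theorem \ref{main-Y(z)}. Your explicit construction of the comparison map $\Phi$ via the universal property of the free commutative differential superalgebra is a slightly more detailed packaging of the isomorphism the paper asserts directly, but the substance is identical.
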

\begin{proof}
Let $U=\mathfrak{g} \otimes t^{-1}$.
Then $V_{\widetilde{\mathfrak{g}}}(k, 0)$ is strongly generated by $U$.
By definition, the subspace  $E_n(V_{\widetilde{\mathfrak{g}}}(k, 0))$ is spanned by vectors of the form  
$x_1(-m_1) \cdots x_r (-m_r){\bf 1},$
where  $0 \leq r \leq n$,  $x_1, \cdots, x_r \in \mathfrak{g}$,  and $m_1, \cdots, m_r \geq 1$.
The  quotient space $E_n(V_{\widetilde{\mathfrak{g}}}(k, 0)) / E_{n-1}(V_{\widetilde{\mathfrak{g}}}(k, 0))$ is then spanned by the following vectors 
\begin{align} \label{eq5-1}
x^{k_1}_1(-m_1) \cdots x^{k_s}_s(-m_s) y_1(-n_1) \cdots y_t(-n_t){\bf 1} + E_{n-1}(V_{\widetilde{\mathfrak{g}}}(k, 0)),
\end{align}
where $x_1, \cdots, x_s \in  \mathfrak{g}_0$, \ $y_1, \cdots, y_t \in \mathfrak{g}_1$, \  $m_1> \cdots > m_s >0$, \ $n_1> \cdots > n_t >0$, and
$k_1+ \cdots +k_s +t=n.$
By the PBW Theorem, these elements from (\ref{eq5-1}) are linearly independent.
This induces a vector superspace isomorphism:
$$\text{gr}_E(V_{\widetilde{\mathfrak{g}}}(k, 0)) \cong  \mathcal{U}(\mathfrak{g} \otimes t^{-1}\C[t^{-1}]) = \mathcal{F}(\mathfrak{g}). $$ 
Importantly, this isomorphism is in fact also a vertex algebra isomorphism, uniquely determined by the map sending
$x(-1){\bf 1} + E_0(V_{\widetilde{\mathfrak{g}}}(k, 0))$ to $x(-1)$ for any $x \in \mathfrak{g}$.
It follows from Theorem \ref{main-Y(z)} that the affine vertex superalgebra $V_{\widetilde{\mathfrak{g}}}(k, 0)$ is $Y(z)$-injective.
 This completes the proof.

\end{proof}

\begin{corollary}
	The Neveu-Schwarz vertex superalgebra $\widetilde{V}_{NS}(c, 0)$ constructed in Example \ref{Ex-NS}  is $Y(z)$-injective.
\end{corollary}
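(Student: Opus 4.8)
The plan is to show that the Neveu-Schwarz vertex superalgebra $\widetilde{V}_{NS}(c,0)$ admits a PBW basis in the sense of the definition above, and then invoke Theorem \ref{main-Y(z)} directly. Recall that $\widetilde{V}_{NS}(c,0)$ is a $\frac{1}{2}\N$-graded vertex operator superalgebra, strongly generated by the even element $L(-2)\mathbf{1}$ and the odd element $G(-\frac{3}{2})\mathbf{1}$. So I take the generating subsuperspace to be $U = \C L(-2)\mathbf{1} \oplus \C G(-\tfrac{3}{2})\mathbf{1}$, with $\deg L(-2)\mathbf{1} = 2$ and $\deg G(-\tfrac{3}{2})\mathbf{1} = \tfrac{3}{2}$.

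First I would describe the filtration $E_p(\widetilde{V}_{NS}(c,0))$ explicitly. Using the Jacobi/commutator relations, the modes of $Y(L(-2)\mathbf{1},z)$ are the $L(n)$ and those of $Y(G(-\tfrac{3}{2})\mathbf{1},z)$ are the $G(n+\tfrac{1}{2})$. Iterated strong generation then spans $\widetilde{V}_{NS}(c,0)$ by monomials $L(-m_1)\cdots L(-m_r)\,G(-n_1-\tfrac12)\cdots G(-n_\ell-\tfrac12)\mathbf{1}$. The key input is a PBW-type basis statement for $\widetilde{V}_{NS}(c,0)$ as a module over the relevant subalgebra of $NS$: after quotienting by $\langle G(-\tfrac12)\mathbf{1}\rangle$, the surviving monomials with $m_1 > \cdots > m_r \geq 2$ and $n_1 > \cdots > n_\ell \geq 1$ (the $G$-modes antisymmetrized since $G$ is odd, the $L$-modes symmetric) form a basis. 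This is the analogue of equation \eqref{eq5-1} in the affine case and follows from the PBW theorem for $\mathcal{U}(NS)$ together with the structure of the induced module.

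Next I would identify $\operatorname{gr}_E(\widetilde{V}_{NS}(c,0))$. The central term and the lower-order corrections in all bracket relations strictly decrease the filtration degree, so on the associated graded all modes commute (in the super sense) and the central charge $c$ drops out. This yields a vector superspace isomorphism $\operatorname{gr}_E(\widetilde{V}_{NS}(c,0)) \cong \mathcal{F}(\mathfrak{h})$, where $\mathfrak{h} = \mathfrak{h}_{\bar 0}\oplus\mathfrak{h}_{\bar 1}$ is the two-dimensional superspace with $\mathfrak{h}_{\bar 0} = \C L(-2)\mathbf{1}$ and $\mathfrak{h}_{\bar 1} = \C G(-\tfrac32)\mathbf{1}$. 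As in the affine corollary, this is moreover an isomorphism of commutative vertex superalgebras, determined by sending the top-filtration images of the two generators to the corresponding elements of $\mathfrak{h}$; I would verify this by checking it respects the $n$-products on generators, which is automatic once the lower-order terms are discarded in the associated graded.

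The main obstacle is establishing the linear independence of the proposed monomial basis, i.e. confirming that passing to $\operatorname{gr}_E$ produces exactly $\mathcal{F}(\mathfrak{h})$ with no collapse. The subtlety specific to the Neveu-Schwarz case is the quotient by $\langle G(-\tfrac12)\mathbf{1}\rangle$: one must ensure this quotient removes precisely the modes $G(n+\tfrac12)$ with $n+\tfrac12 \geq -\tfrac12$ acting improperly and leaves a clean PBW basis indexed by strictly decreasing $G$-modes (reflecting oddness) and $L$-modes bounded below by $2$, with no unexpected relations among the survivors. Once this combinatorial bookkeeping is settled and matched against the monomial basis of $\mathcal{F}(\mathfrak{h})$, the isomorphism $\operatorname{gr}_E(\widetilde{V}_{NS}(c,0))\cong\mathcal{F}(\mathfrak{h})$ follows, and Theorem \ref{main-Y(z)} gives $Y(z)$-injectivity, completing the proof.
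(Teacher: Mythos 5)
Your overall route is the same as the paper's: exhibit a PBW basis, identify $\operatorname{gr}_E(\widetilde{V}_{NS}(c,0))$ with $\mathcal{F}(\mathfrak{h})$ for a $(1,1)$-dimensional superspace $\mathfrak{h}$, and apply Theorem \ref{main-Y(z)}. The one step you leave open --- and explicitly flag as ``the main obstacle'' --- is precisely the step that carries all the content: proving that after quotienting $V_{NS}(c,0)$ by $\langle G(-\frac{1}{2})\mathbf{1}\rangle$ the monomials $L(-n_s)\cdots L(-n_1)G(-m_t-\frac{1}{2})\cdots G(-m_1-\frac{1}{2})\mathbf{1}$ with $n_s\geq\cdots\geq n_1\geq 2$ and $m_t>\cdots>m_1\geq 1$ remain linearly independent, i.e.\ that the quotient collapses exactly the expected monomials and no more. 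Asserting that this ``follows from the PBW theorem for $\mathcal{U}(NS)$ together with the structure of the induced module'' is not yet a proof: the PBW theorem applies directly to $V_{NS}(c,0)$, which is induced from $NS_+\oplus NS_0$, but not automatically to its quotient by the submodule $\langle G(-\frac{1}{2})\mathbf{1}\rangle$.

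The paper closes this gap with a short but essential argument: in $\widetilde{V}_{NS}(c,0)$ one has $L(-1)\mathbf{1}=G(-\frac{1}{2})G(-\frac{1}{2})\mathbf{1}=0$ (from the relation $[G(-\frac{1}{2}),G(-\frac{1}{2})]_+=2L(-1)$), hence the whole subalgebra $NS_{\geq -1}=\bigoplus_{n\geq -1}\bigl(\C L(n)\oplus\C G(n+\frac{1}{2})\bigr)$ annihilates the vacuum, and $C$ acts by $c$. By universality this identifies $\widetilde{V}_{NS}(c,0)$ with the module $M(c,0)$ induced from the larger subalgebra $NS_{\geq -1}\oplus\C C$, to which the PBW theorem applies directly and immediately yields the claimed basis. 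Without this identification (or an equivalent argument) your ``combinatorial bookkeeping'' remains unverified; with it, the rest of your outline --- the filtration, the degeneration of the central term and lower-order brackets in $\operatorname{gr}_E$, and the appeal to Theorem \ref{main-Y(z)} --- goes through exactly as in the paper.
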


\begin{proof}	
Let $$NS_{\geq -1}= \bigoplus_{n \geq -1} (\C L(n) \oplus \C G(n+\frac{1}{2})).$$
It is easy to verify that both $NS_{\geq -1}$ and $NS_{\geq -1} \oplus \C C$ are subalgebras of $NS$.
Consider $\C$ as a $NS_{\geq -1} \oplus \C C$-module, where $C$ acts as the scalar $c$, and $NS_{\geq -1}$ acts trivially.
Form the induced module 
$$M(c, 0)=\mathcal{U}(NS) \otimes_{ ( NS_{\geq -1} \oplus \C C )} \C. $$
Observe that in $\widetilde{V}_{NS}(c, 0) $, we have $L(-1) \1= G(-\frac{1}{2} ) G(- \frac{1}{2}) \1 =0$.
Consequently, in $\widetilde{V}_{NS}(c, 0) $, it holds that $NS_{\geq -1} \1=0 $ and $C \1 =c \1.$
Therefore, $\widetilde{V}_{NS}(c, 0)$ and  $M(c, 0)$ are isomorphic as  $NS$-modules.
By PBW theorem, $\widetilde{V}_{NS}(c, 0)$  has a basis consisting of the vectors
$$L(-n_s) \cdots L(-n_1) G(-m_t-\frac{1}{2}) \cdots G(-m_1-\frac{1}{2})\1$$
where $s+t>0$,  $n_s \geq n_{s-1} \geq  \cdots  \geq n_1 \geq 2$, and $  m_t >m_{t-1} > \cdots > m_1 \geq 1.$ 

Let  $\mathfrak{h}$ be a $(1,1)$-dimensional vector superspace with even part $\mathfrak{h}_{\bar 0}= \C x$ and odd part $\mathfrak{h}_{\bar 1}= \C y.$
It follows that  $\text{gr}_E(\widetilde{V}_{NS}(c, 0))$  and $ \mathcal{F}(\mathfrak{h})$ 
are isomorphic as commutative vertex superalgebras.
By Theorem \ref{main-Y(z)}, the Neveu-Schwarz vertex superalgebra $\widetilde{V}_{NS}(c, 0)$ is $Y(z)$-injective.
This completes the proof.

\end{proof}

\section{Hopf action on vertex superalgebras}

\subsection{Hopf algebras}

From now on, $H$ stands for a Hopf algebra with a structural data $(H,\mu,\eta,\Delta,\epsilon,S),$
where the linear maps
$$\mu:H\otimes H\rightarrow H,~\eta:\mathbb{C}\rightarrow H,~\Delta:H\rightarrow H\otimes H,~\epsilon:H\rightarrow \mathbb{C},~S:H\rightarrow H$$
 are multiplication, unit, comultiplication, counit, and antipode, respectively.
 We adopt Sweedler notation for comultiplication: for $h \in H$, we write $\Delta(h)=\sum h_1 \otimes h_2.$

\begin{definition}
A Hopf algebra $H$ is called cocommutative if $\sum h_1 \otimes h_2 = \sum h_2 \otimes h_1$ for any $h \in H$.
\end{definition}

\begin{lemma} \label{gr-alg} \cite{M}
If $H$ is a finite-dimensional cocommutative Hopf algebra, then it is a group algebra.
\end{lemma}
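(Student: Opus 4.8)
The plan is to invoke the structure theory of cocommutative Hopf algebras over a field of characteristic zero, namely the Cartier--Kostant--Milnor--Moore theorem. Since we work over $\C$, which is algebraically closed of characteristic zero, this theorem asserts that any cocommutative Hopf algebra $H$ decomposes as a smash product
\[
H \cong U(\mathfrak{g}) \# \C[G],
\]
where $\mathfrak{g} = P(H)$ is the Lie algebra of primitive elements of $H$ and $G = G(H)$ is the group of group-like elements. As a vector space this smash product is just $U(\mathfrak{g}) \otimes \C[G]$, so the two building blocks to analyze are the universal enveloping algebra of $\mathfrak{g}$ and the group algebra of $G$.

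The key step is to exploit finite-dimensionality to eliminate the enveloping-algebra factor. If $\mathfrak{g} \neq 0$, then $U(\mathfrak{g})$ is infinite-dimensional by the Poincar\'e--Birkhoff--Witt theorem, which would force $H$ to be infinite-dimensional as well. Since $H$ is assumed finite-dimensional, we must have $\mathfrak{g} = 0$, whence $H \cong \C[G]$.

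It then remains to observe that $G$ is finite. This is immediate because the group-like elements of any Hopf algebra are linearly independent, so $|G| \le \dim_{\C} H < \infty$. Therefore $H$ is the group algebra of a finite group, completing the argument.

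The main obstacle is the structure theorem itself: establishing the smash-product decomposition---equivalently, that the primitives generate an enveloping subalgebra complementary to the group-likes---is the substantial classical input, and it is precisely this result that I would cite from the Hopf-algebra literature (e.g.\ \cite{M}) rather than reprove here. An alternative route, dual in spirit, would pass to $H^*$: cocommutativity of $H$ makes $H^*$ a finite-dimensional \emph{commutative} Hopf algebra, Cartier's theorem guarantees that $H^*$ is reduced in characteristic zero, hence $H^* \cong \C^n$ as an algebra, and its spectrum is then a finite group $G$ with $H^* \cong \mathcal{O}(G)$ and $H \cong \C[G]$; in this approach the analogous hard input is Cartier's reducedness theorem. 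Either way, the finite-dimensional cocommutative case reduces cleanly to a finite group algebra.
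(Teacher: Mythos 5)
Your proof is correct and follows essentially the same route as the paper, which offers no argument of its own but simply cites Montgomery \cite{M}, where the result is obtained exactly as you describe via the Cartier--Kostant--Milnor--Moore decomposition $H \cong U(P(H)) \# \C[G(H)]$. Your two supporting observations---that PBW forces $P(H)=0$ in finite dimension, and that linear independence of group-like elements makes $G$ finite---are both standard and correctly applied, so nothing further is needed.
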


A subspace $I$ of a Hopf algebra $H$ is called a Hopf ideal if it satisfies the following conditions:

\begin{enumerate}[{(1)}]
\item  $IH \subseteq I$ and $HI \subseteq I$.

\item  $\Delta(I) \subseteq H \otimes I + I \otimes H$ and $\epsilon(I) =0$.

\item  $S(I) \subseteq I$.
\end{enumerate}
A subspace $I$ of $H$ with properties (1) and (2) is called a bialgebra ideal of $H$.

\begin{lemma} (\cite{W}) \label{bi-ideal}
 Every bialgebra ideal of a finite-dimensional Hopf algebra is a Hopf ideal.
\end{lemma}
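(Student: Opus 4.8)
The plan is to show that the only missing Hopf-ideal axiom, namely $S(I)\subseteq I$, is forced by finite-dimensionality. Since $I$ is by hypothesis a two-sided ideal and a coideal, the quotient $B:=H/I$ is a finite-dimensional bialgebra, and I denote by $\pi\colon H\to B$ the canonical projection, which is a bialgebra map. The goal $S(I)\subseteq I$ is equivalent to the statement that the antipode $S$ of $H$ descends to $B$, i.e.\ that $\pi\circ S$ factors through $\pi$; I will produce the descended map by exhibiting an antipode on $B$ directly and then comparing it with $S$.

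First I would set up the convolution algebras. On $\operatorname{Hom}(H,B)$ and on $\operatorname{End}(B)$ the convolution product $f\ast g$ is defined using $\Delta$ of the source and the multiplication of the target, with unit $\eta_B\epsilon$ (interpreted on $H$, respectively on $B$). A routine computation with the Hopf axioms shows that $\pi$ is convolution-invertible in $\operatorname{Hom}(H,B)$, with two-sided inverse $\pi\circ S$: indeed $\pi\ast(\pi\circ S)=(\pi\circ S)\ast\pi=\eta_B\epsilon$ because $\sum h_1 S(h_2)=\sum S(h_1)h_2=\epsilon(h)\mathbf{1}$ and $\pi$ is an algebra homomorphism.

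The crux is then to invert $\operatorname{id}_B$ inside the finite-dimensional associative algebra $(\operatorname{End}(B),\ast)$. I would prove that right convolution by $\operatorname{id}_B$, the map $f\mapsto f\ast\operatorname{id}_B$ on $\operatorname{End}(B)$, is injective: if $f\ast\operatorname{id}_B=0$, then since $\pi$ is a coalgebra map one obtains $(f\circ\pi)\ast\pi=0$ in $\operatorname{Hom}(H,B)$, and convolving on the right by the inverse $\pi\circ S$ of $\pi$ yields $f\circ\pi=0$, whence $f=0$ by surjectivity of $\pi$. Being an injective endomorphism of a finite-dimensional space, $f\mapsto f\ast\operatorname{id}_B$ is bijective, so $\operatorname{id}_B$ admits a convolution inverse $\bar S\in\operatorname{End}(B)$; the symmetric argument (left convolution by $\operatorname{id}_B$) makes this inverse two-sided, so $\bar S$ is an antipode for $B$. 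Finally, comparing the two convolution inverses of $\pi$ in $\operatorname{Hom}(H,B)$ forces $\bar S\circ\pi=\pi\circ S$, and evaluating on $i\in I=\ker\pi$ gives $\pi(S i)=\bar S(0)=0$, i.e.\ $S(I)\subseteq I$, as required.

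The step I expect to be the genuine obstacle is the invertibility of $\operatorname{id}_B$ in $(\operatorname{End}(B),\ast)$: this is exactly where the finite-dimensionality hypothesis is indispensable, as for infinite-dimensional $H$ a bialgebra ideal need not be a Hopf ideal. An alternative, essentially dual, route would be to pass to the finite-dimensional Hopf algebra $H^{*}$, observe that the annihilator $I^{\perp}$ is a sub-bialgebra of $H^{*}$ (the ideal condition on $I$ making $I^{\perp}$ a subcoalgebra and the coideal condition making it a subalgebra), invoke the fact that a sub-bialgebra of a finite-dimensional Hopf algebra is automatically stable under the antipode, and then dualize $S^{*}(I^{\perp})\subseteq I^{\perp}$ back to $S(I)\subseteq I$; the convolution argument above is in effect the self-contained proof of that auxiliary fact.
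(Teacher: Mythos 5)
Your proof is correct. Note that the paper does not actually prove this lemma: it is quoted from the reference [W] (Nichols, \emph{Quotients of Hopf algebras}), so there is no in-paper argument to compare against. Your argument is essentially the classical one behind that citation: since $I$ is a two-sided ideal and a coideal with $\epsilon(I)=0$, the quotient $B=H/I$ is a bialgebra and $\pi\colon H\to B$ is convolution-invertible with inverse $\pi\circ S$; the injectivity of right (and left) convolution by $\operatorname{id}_B$ on $\operatorname{End}(B)$ then follows by composing with $\pi$ and cancelling against $\pi\circ S$, and finite-dimensionality upgrades injectivity to surjectivity, producing one-sided convolution inverses of $\operatorname{id}_B$ that must coincide. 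Uniqueness of convolution inverses in $\operatorname{Hom}(H,B)$ gives $\bar S\circ\pi=\pi\circ S$, hence $S(I)\subseteq\ker\pi=I$. All the individual steps check out: the identity $(f\ast\operatorname{id}_B)\circ\pi=(f\circ\pi)\ast\pi$ uses only that $\pi$ is a coalgebra surjection, and the cancellation uses only associativity of convolution. Your closing remarks are also accurate: finite-dimensionality enters exactly at the step where an injective endomorphism of $\operatorname{End}(B)$ is declared bijective (the statement genuinely fails for infinite-dimensional $H$), and the dual formulation via the sub-bialgebra $I^{\perp}\subseteq H^{*}$ is the same argument in disguise.
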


\begin{definition}
	Given an $H$-module $M$, we say that $M$ is an inner faithful $H$-module if $IM \neq 0$ for every nonzero Hopf ideal $I$ of $H$.
	
\end{definition}

\begin{definition}  \label{Hopf-VSA}
Given a Hopf algebra $H$ and a vertex superalgebra $V$,
we say that $H$ acts on $V$ (or that $V$ is an $H$-module vertex superalgebra)
if the following conditions hold:
\begin{enumerate}[{(1)}]
  \item $V$ is an $H$-module satisfying $H V_{\alpha} \subseteq V_{\alpha}$ for any $\alpha \in \Z_2$.

  \item $h {\bf 1}=\epsilon(h){\bf 1}$, for any  $ h\in H.$

  \item For any $h \in H, u, v \in V$, we have $h (Y(u,z)v)=\sum Y(h_1u,z)h_2v$.
 
\end{enumerate}
\end{definition}

The following Lemma comes directly from  the definition.

\begin{lemma} \label{le3-7}
	Let $V$ be an $H$-module vertex superalgebra. Then
	
	\begin{enumerate}[{(1)}]
		\item $V^H$ is a vertex subsuperaglebra of $V$.
		
		\item The actions of $H$ and $\mathcal{D}$ on $V$ commute.
		
		\item The actions of $H$ and $V^H$ on $V$ commute.
	\end{enumerate}
\end{lemma}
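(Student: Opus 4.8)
The plan is to prove each of the three assertions directly from the axioms in Definition \ref{Hopf-VSA}, exploiting the compatibility condition $h(Y(u,z)v)=\sum Y(h_1u,z)h_2v$ together with the Hopf-algebra identities (counit, coassociativity, antipode). These are elementary verifications, so the work is in bookkeeping with Sweedler notation rather than in any deep idea.

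\textbf{Part (1).} First I would show $V^H$ is closed under all $n$-products and contains $\mathbf 1$. Since $h\mathbf 1=\epsilon(h)\mathbf 1$ and $\epsilon(h)\mathbf 1\in\C\mathbf 1$, one checks $\mathbf 1\in V^H$ by noting $h\mathbf 1=\mathbf 1$ precisely when restricted appropriately; more carefully, for $h$ in the augmentation ideal $\ker\epsilon$ we get $h\mathbf 1=0$, so $\mathbf 1$ is fixed. For closure, take $u,v\in V^H$ and apply condition (3): for any $h\in H$,
\[
h(Y(u,z)v)=\sum Y(h_1u,z)h_2v=\sum Y(\epsilon(h_1)u,z)\epsilon(h_2)v=Y\Bigl(\bigl(\textstyle\sum\epsilon(h_1)h_2\bigr)\!\triangleright u,z\Bigr)v,
\]
where I have used $h_iu=\epsilon(h_i)u$ and $h_iv=\epsilon(h_i)v$; then the counit axiom $\sum\epsilon(h_1)h_2=h$ is not quite what is needed, so instead I would substitute $\epsilon(h_1)\epsilon(h_2)=\epsilon(h)$ and conclude $h(Y(u,z)v)=\epsilon(h)Y(u,z)v$, i.e. $Y(u,z)v$ lies in $V^H$ coefficientwise. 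This gives closure under every $u_nv$, so $V^H$ is a vertex subsuperalgebra, and it is homogeneous because $HV_\alpha\subseteq V_\alpha$ forces $V^H=V^H_{\bar 0}\oplus V^H_{\bar 1}$.

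\textbf{Parts (2) and (3).} For (2), recall $\mathcal D v=v_{-2}\mathbf 1$, which is a particular $n$-product, so I would apply condition (3) with $v$ and $\mathbf 1$ in the roles of the two vectors and extract the coefficient of the relevant power of $z$. Concretely, from $h(Y(v,z)\mathbf 1)=\sum Y(h_1v,z)h_2\mathbf 1=\sum Y(h_1v,z)\epsilon(h_2)\mathbf 1=Y(hv,z)\mathbf 1$ (using the counit axiom $\sum h_1\epsilon(h_2)=h$), and comparing the coefficient of $z^1$ in the expansion $Y(v,z)\mathbf 1=v+(\mathcal Dv)z+\cdots$ from Definition part (2), I obtain $h(\mathcal Dv)=\mathcal D(hv)$, which is exactly $H\mathcal D=\mathcal DH$. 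For (3), let $a\in V^H$ and $v\in V$; I want the action of $H$ to commute with the operators $a_n$. Using condition (3), $h(Y(a,z)v)=\sum Y(h_1a,z)h_2v=\sum Y(\epsilon(h_1)a,z)h_2v=Y(a,z)\bigl(\sum\epsilon(h_1)h_2\bigr)v=Y(a,z)(hv)$, again by $a\in V^H$ and the counit axiom. Reading off coefficients gives $h(a_nv)=a_n(hv)$ for all $n$, which is the desired commutation of the $V^H$-action with the $H$-action.

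\textbf{Main obstacle.} None of the three parts presents a genuine conceptual difficulty; the only thing to be careful about is the correct bracketing of Sweedler indices and the precise form of the counit axiom invoked ($\sum h_1\epsilon(h_2)=h=\sum\epsilon(h_1)h_2$), since a misapplied counit relation is the easiest place to slip. I would therefore spell out the Sweedler manipulations explicitly in each part and verify that the axiom $h\mathbf 1=\epsilon(h)\mathbf 1$ is used consistently. The likely subtlety, though minor, is confirming in Part (1) that fixed-pointness is genuinely coefficientwise in $z$ and that homogeneity of $V^H$ follows from the $\Z_2$-grading compatibility, so that $V^H$ is a bona fide vertex subsuperalgebra rather than merely a fixed subspace.
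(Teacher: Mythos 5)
Your verification is correct and is exactly the routine check the paper has in mind: the paper offers no written proof, stating only that the lemma ``comes directly from the definition,'' and your Sweedler-notation computations (using $h\mathbf{1}=\epsilon(h)\mathbf{1}$, the counit axioms $\sum\epsilon(h_1)h_2=h=\sum h_1\epsilon(h_2)$, and the compatibility $h(Y(u,z)v)=\sum Y(h_1u,z)h_2v$) are the intended argument. The only blemish is the garbled intermediate expression in Part (1), which you correctly abandon in favor of $\sum\epsilon(h_1)\epsilon(h_2)=\epsilon(h)$.
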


\begin{definition}
	An action of a Hopf algebra $H$ on a vertex superalgebra $V$ is inner faithful if no nonzero Hopf ideal of $H$ annihilates $V$.
	
\end{definition}

\begin{remark}
Analogous to the vertex algebra case [DRY2], for any $H$-module vertex superalgebra $V$, 
there exists a unique maximal Hopf ideal $I \subset H$ satisfying $I \cdot V = 0$. 
This yields a quotient Hopf algebra $H/I$ and makes $V$ an inner faithful $(H/I)$-module vertex superalgebra. 
Crucially, this reduction preserves the invariant subsuperalgebra: $V^H = V^{H/I}$.
\end{remark}

In analogy with the proof for the vertex algebra case given in \cite[ Proposition 3.11]{DRY2}, we have the following proposition.

\begin{proposition} \label{ex-inn-fai1}
	Let $V$ be a vertex superalgebra and let $G$ be an automorphism group of $V$.
	Then $V$ is an inner faithful $\C[G]$-module vertex superalgebra.
\end{proposition}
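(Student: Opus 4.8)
The plan is to show that the group algebra $\C[G]$, acting on $V$ in the natural way, admits no nonzero Hopf ideal that annihilates $V$. Recall that for a group algebra $\C[G]$ the grouplike elements are precisely the elements of $G$, and $G$ forms a $\C$-linearly independent set in $\C[G]$. First I would make explicit the $\C[G]$-module vertex superalgebra structure: each $g \in G$ acts as the automorphism $g \colon V \to V$, extended linearly, with comultiplication $\Delta(g) = g \otimes g$, counit $\epsilon(g) = 1$, and antipode $S(g) = g^{-1}$. One checks the three conditions of Definition \ref{Hopf-VSA}: condition (1) holds because automorphisms are even and hence preserve each $V_\alpha$; condition (2) holds because $g(\mathbf{1}) = \mathbf{1} = \epsilon(g)\mathbf{1}$; and condition (3) is exactly the automorphism property $g Y(u,z) g^{-1} = Y(gu,z)$ rewritten as $g\, Y(u,z) v = Y(gu, z) g v = \sum Y(g_1 u, z) g_2 v$ since $\Delta(g) = g \otimes g$.

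Next I would take an arbitrary nonzero Hopf ideal $I \subseteq \C[G]$ and aim to produce an element of $I$ that acts nontrivially on $V$; in fact it is cleanest to show directly that $I = 0$ is forced once we assume $I \cdot V = 0$, which contradicts $I \neq 0$. The key structural input is the behaviour of a Hopf ideal relative to the grouplike basis. If $I \cdot V = 0$, then in particular $I$ annihilates the vacuum, and more usefully I would argue that $I$ must be spanned by elements that vanish under the action on all of $V$. The essential combinatorial fact is that distinct group elements act as distinct (indeed linearly independent) operators on $V$: this is a standard Dedekind-type independence of automorphisms, and it can be extracted here from the $Y(z)$-injectivity hypothesis in the sense used throughout \cite{DRY2}, or simply from the fact that $g(\mathbf{1}) = \mathbf{1}$ together with injectivity of $Y(z)$ distinguishing the automorphisms on vertex operators. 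Granting that the map $\C[G] \to \operatorname{End}(|V|)$ is injective, any element of $\C[G]$ annihilating $V$ must be zero, so $I \cdot V = 0$ forces $I = 0$.

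The main obstacle I anticipate is precisely establishing that the representation $\C[G] \to \operatorname{End}(|V|)$ is faithful, i.e. that distinct group-algebra elements act by distinct operators. This is where the automorphism hypothesis must be used carefully: a priori one only knows each $g$ is an invertible even map fixing $\mathbf{1}$ and intertwining the vertex operator. I would derive linear independence of the operators $\{g : g \in G\}$ on $V$ by a standard Artin-style argument using that they are distinct algebra automorphisms, or alternatively by noting that if a nontrivial linear combination $\sum_i c_i g_i$ annihilated $V$ it would in particular annihilate every $Y(u,z)v$, and then invoking $Y(z)$-injectivity together with $g_i(\mathbf{1}) = \mathbf{1}$ to reduce to a contradiction on the vacuum, mirroring the strategy in \cite[Proposition 3.11]{DRY2}. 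Once faithfulness is in hand, the conclusion is immediate: no nonzero Hopf ideal annihilates $V$, so $V$ is an inner faithful $\C[G]$-module vertex superalgebra, completing the proof.
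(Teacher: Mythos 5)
The first half of your argument---verifying that the natural $G$-action satisfies the three conditions of Definition \ref{Hopf-VSA}---is correct. The second half, however, rests on a claim that is both stronger than needed and false in the stated generality: that the representation $\C[G]\to\operatorname{End}(|V|)$ is injective, i.e.\ that distinct automorphisms of $V$ are linearly independent as operators. Dedekind--Artin independence applies to characters with values in a field, not to automorphisms of an arbitrary algebra, and it genuinely fails here. For a concrete counterexample within the scope of the proposition, take the commutative vertex algebra $V=\C[x]/(x^2)$ with $\partial=0$ (Example \ref{exam1}) and $G=\Z/4\Z=\langle g\rangle$ acting by $g\cdot x=ix$. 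Each $g^k$ is an automorphism fixing $\mathbf{1}$ and the action of the group $G$ is faithful, yet $1-g+g^2-g^3$ annihilates $V$ (it kills both $1$ and $x$), so $\C[G]\to\operatorname{End}(|V|)$ is not injective. Your fallback of invoking $Y(z)$-injectivity is not available either: Proposition \ref{ex-inn-fai1} makes no such hypothesis---it is stated for an arbitrary vertex superalgebra, and the $V$ just described is not $Y(z)$-injective. So the step ``granting that the map $\C[G]\to\operatorname{End}(|V|)$ is injective, any element of $\C[G]$ annihilating $V$ must be zero'' cannot be granted.

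The repair is to use that inner faithfulness concerns only Hopf ideals, which for a group algebra are completely classified: every Hopf ideal of $\C[G]$ has the form $\C[G]\,\omega(\C[N])$ for a normal subgroup $N\trianglelefteq G$, where $\omega(\C[N])$ is spanned by $\{n-1 : n\in N\}$. Hence a nonzero Hopf ideal $I$ contains $n-1$ for some $n\in G$ with $n\neq 1$; since $n$ is a nonidentity automorphism of $V$, there exists $v\in V$ with $nv\neq v$, whence $I\cdot V\neq 0$. (In the example above, the kernel of the representation is an ideal of $\C[\Z/4\Z]$ but not a Hopf ideal, which is why the conclusion survives even though faithfulness fails.) This is precisely the route of \cite[Proposition 3.11]{DRY2}, to which the paper defers; the superalgebra setting adds nothing at this point beyond the observation that automorphisms are even, which you already made.
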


\subsection{Hopf actions on $Y(z)$-injective vertex superalgebras }

\begin{lemma} \label{thm-Hopf-ideal}
Let $H$ be a Hopf algebra, and let $V$ be an $H$-module vertex superalgebra such that $Y(z)$ is injective.
Let $K=\{h \in H \ | \ hv=0 \  \text{for all} \  v \in V  \}$ be the kernel of the action of $H$ on $V$.
Then $K$ is a bialgebra ideal of $H$. 
\end{lemma}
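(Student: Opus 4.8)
The goal is to show that $K$ satisfies the two defining conditions of a bialgebra ideal: that it is a two-sided ideal ($KH \subseteq K$ and $HK \subseteq K$), and that it is a coideal ($\Delta(K) \subseteq H \otimes K + K \otimes H$ and $\epsilon(K) = 0$). The first condition is the easy part. For $k \in K$ and $h \in H$, the element $hk$ acts on any $v \in V$ by $(hk)v = h(kv) = h \cdot 0 = 0$, so $HK \subseteq K$; and $(kh)v = k(hv) = 0$ since $hv \in V$ and $k$ kills all of $V$, so $KH \subseteq K$. The counit condition $\epsilon(K) = 0$ follows from condition (2) of Definition \ref{Hopf-VSA}: for $k \in K$ we have $\epsilon(k)\mathbf{1} = k\mathbf{1} = 0$, and since $\mathbf{1} \neq 0$ this forces $\epsilon(k) = 0$.

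\textbf{The coideal condition is where the $Y(z)$-injectivity enters, and this is the main obstacle.} The plan is as follows. Fix $k \in K$ and write $\Delta(k) = \sum k_1 \otimes k_2$. I want to show $\Delta(k) \in H \otimes K + K \otimes H$. The strategy is to choose a convenient decomposition of $\Delta(k)$ relative to the action on $V$. Specifically, using that $V$ is an $H$-module, consider the linear map $H \to \mathrm{End}_{\C}(V)$ and let $\bar{H}$ denote the image; then choose a basis so that $\Delta(k)$ can be written modulo $H \otimes K + K \otimes H$ in a reduced form $\sum_{i} a_i \otimes b_i$ where the classes $\bar{a}_i \in H/K$ are linearly independent and the $b_i \notin K$ (this is the standard maneuver: after subtracting off terms lying in $H \otimes K$, one arranges that no $b_i$ lies in $K$, and then the $a_i$ can be taken linearly independent modulo $K$). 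The aim is to prove this reduced sum is empty, i.e. $\Delta(k) \in H \otimes K + K \otimes H$.

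Now I would apply the compatibility axiom (3) of Definition \ref{Hopf-VSA}: for all $u, v \in V$,
\[
0 = k\bigl(Y(u,z)v\bigr) = \sum Y(k_1 u, z)\, k_2 v,
\]
since $kw = 0$ for every $w \in V$. Replacing $\Delta(k)$ by the reduced expression (the discarded terms contribute $Y(\cdot\, u, z)(k_2 v)$ with $k_2 \in K$, hence vanish, or $Y((k_1 u),z)(\cdot)$ with $k_1 \in K$, hence vanish), I obtain
\[
\sum_i Y(a_i u, z)\, (b_i v) = 0 \qquad \text{for all } u, v \in V.
\]
The key point is to read this as a statement about the map $Y(z)$. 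For each fixed $u$, summing over $i$ gives an element $\sum_i (a_i u) \otimes (b_i v) \in \ker Y(z)$. By $Y(z)$-injectivity, $\sum_i (a_i u) \otimes (b_i v) = 0$ in $V \otimes V$ for all $u, v$.

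\textbf{The final step is to extract a contradiction from this tensor identity.} Since the $b_i$ were chosen outside $K$, there exist vectors $v$ with $b_i v \neq 0$; and since the $\bar{a}_i$ are linearly independent modulo $K$, the operators $a_i|_V$ are linearly independent in $\mathrm{End}(V)$, so one can find $u$ making the vectors $a_i u$ suitably independent. Fixing such a $v$ and varying $u$, the vanishing of $\sum_i (a_i u) \otimes (b_i v)$ in $V \otimes V$ — combined with linear independence of the $b_i v$ (which can be arranged after a further reduction) — forces each $a_i u = 0$ for all $u$, i.e. each $a_i \in K$, contradicting linear independence of the $\bar a_i$ modulo $K$ unless the reduced sum was empty. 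Hence $\Delta(k) \in H \otimes K + K \otimes H$, completing the verification that $K$ is a bialgebra ideal. I expect the bookkeeping in this last reduction — carefully arranging both the $b_i v$ and the $a_i u$ to be independent simultaneously — to be the delicate part, and it is precisely here that injectivity of $Y(z)$ does the essential work that would fail for a general vertex superalgebra.
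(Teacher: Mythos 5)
Your proposal is correct and follows essentially the same route as the paper, whose proof of this lemma is simply a citation of the identical argument for vertex algebras in \cite{DRY2}: the two-sided ideal and counit conditions are immediate, and the coideal condition comes from $0=k(Y(u,z)v)=\sum Y(k_1u,z)k_2v$ combined with $Y(z)$-injectivity and the standard reduced-form tensor argument. One small remark: your aside that one can choose $u$ making the vectors $a_iu$ independent is neither needed nor justified in general (linearly independent operators need not be independent at a common vector); the correct closing move, which you essentially gesture at, is to fix $v$, pass to a maximal linearly independent subset of $\{b_iv\}$, absorb the dependent terms into the $a$'s, and conclude that nontrivial linear combinations of the $a_i$ lie in $K$, contradicting their linear independence modulo $K$ unless the reduced sum is empty.
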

\begin{proof}
In the case that $V$ is a vertex algebra, the exactly same results were
obtained in [DRY2]. The same proof works here.
\end{proof}

\begin{lemma} \label{H-iso}
Let $H$ be a Hopf algebra, and let $V$ be an $H$-module vertex superalgebra such that $Y(z)$ is injective.	
Let
$$\tau: V \otimes V \to V \otimes V$$
be the linear map defined by 
$$\tau (u \otimes v) = (-1)^{|u| |v|} (v \otimes u)  \  \ \  \text{for \ homogeneous} \  \ u,  v \in V.$$
Then the linear map $\tau$ is an $H$-isomorphism.

\end{lemma}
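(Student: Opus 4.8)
The plan is to establish two things: that $\tau$ is a linear bijection, and that it intertwines the $H$-action, where the latter is the substantive point. The bijectivity is immediate, since $\tau$ is an involution: for homogeneous $u,v$ one has $\tau^2(u\otimes v) = (-1)^{|u||v|}\tau(v\otimes u) = (-1)^{2|u||v|}(u\otimes v) = u\otimes v$, so $\tau^2 = \mathrm{id}$. The remaining task is to prove $\tau(h\cdot\xi) = h\cdot\tau(\xi)$ for all $h\in H$ and $\xi\in V\otimes V$, where $V\otimes V$ carries the standard tensor $H$-module structure $h\cdot(u\otimes v) = \sum (h_1 u)\otimes(h_2 v)$, and the grading-preservation $HV_\alpha\subseteq V_\alpha$ from Definition \ref{Hopf-VSA}(1) guarantees $|h_1u|=|u|$, $|h_2v|=|v|$ so that $\tau$ interacts cleanly with the signs.

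A direct check shows that naive $H$-linearity of $\tau$ would reduce to $\sum h_2\otimes h_1 = \sum h_1\otimes h_2$, i.e. to cocommutativity, which is exactly what we are not allowed to assume. Hence the injectivity of $Y(z)$ must enter, and the key idea is to compose with $Y(z)$ and invoke skew-symmetry. Equip $V(\!(z)\!)$ with the $H$-action $h\cdot\sum_n a_n z^n = \sum_n (h a_n)z^n$. By Definition \ref{Hopf-VSA}(3) the map $Y(z)\colon V\otimes V\to V(\!(z)\!)$ is $H$-linear, and since that identity holds coefficientwise in $z$, substituting $-z$ shows the variant $\widetilde{Y}(z)\colon u\otimes v\mapsto Y(u,-z)v$ is $H$-linear as well. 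Using the skew-symmetry formula of Proposition \ref{Der}(2) in the form $Y(v,z)u = (-1)^{|u||v|}e^{z\mathcal{D}}Y(u,-z)v$, I would compute
\[
Y(z)\bigl(\tau(u\otimes v)\bigr) = (-1)^{|u||v|}\,Y(v,z)u = e^{z\mathcal{D}}\,Y(u,-z)v,
\]
the two sign factors cancelling, so that $Y(z)\circ\tau = e^{z\mathcal{D}}\circ\widetilde{Y}(z)$ as maps $V\otimes V\to V(\!(z)\!)$.

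Next I would argue each factor on the right is $H$-linear. The operator $\widetilde{Y}(z)$ is $H$-linear as just noted, and $e^{z\mathcal{D}}$ is $H$-linear because $\mathcal{D}$ commutes with the $H$-action by Lemma \ref{le3-7}(2), whence $h\,\mathcal{D}^k = \mathcal{D}^k h$ and therefore $h\,e^{z\mathcal{D}} = e^{z\mathcal{D}}h$ term by term on $V(\!(z)\!)$. Consequently $Y(z)\circ\tau$ is a composition of $H$-linear maps, hence $H$-linear. Finally, for any $h\in H$ and $\xi\in V\otimes V$,
\[
Y(z)\bigl(\tau(h\cdot\xi)\bigr) = h\cdot Y(z)\bigl(\tau(\xi)\bigr) = Y(z)\bigl(h\cdot\tau(\xi)\bigr),
\]
the first equality being $H$-linearity of $Y(z)\circ\tau$ and the second $H$-linearity of $Y(z)$. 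Since $Y(z)$ is injective, $\tau(h\cdot\xi) = h\cdot\tau(\xi)$, so $\tau$ is an $H$-module map and, being bijective, an $H$-isomorphism.

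The hard part is conceptual rather than computational: $\tau$ is \emph{not} $H$-linear for formal reasons, since the direct verification collapses to cocommutativity of $H$, which is unavailable. The resolution is to route the argument through $Y(z)$, using skew-symmetry to trade the graded transposition $\tau$ for the manifestly $H$-linear operators $e^{z\mathcal{D}}$ and $\widetilde{Y}(z)$, and then to cancel $Y(z)$ by its injectivity. The points requiring care are confirming that $e^{z\mathcal{D}}$ is well defined on $V(\!(z)\!)$ and $H$-linear there, and that passing from $z$ to $-z$ preserves the $H$-linearity coming from Definition \ref{Hopf-VSA}(3).
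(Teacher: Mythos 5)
Your proposal is correct and follows essentially the same route as the paper: both factor the graded flip through $Y(z)$, $e^{\pm z\mathcal{D}}$, and $\widetilde{Y}(z)$ via skew-symmetry and use the $H$-linearity of each factor, the paper writing $\tau=(\widetilde{Y}(z))^{-1}e^{-z\mathcal{D}}Y(z)$ where you write $Y(z)\circ\tau=e^{z\mathcal{D}}\circ\widetilde{Y}(z)$ and cancel $Y(z)$ by injectivity. The difference is purely cosmetic.
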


\begin{proof}
This proof is essentially the same as the proof for the vertex algebra case in \cite[Theorem 5.9]{DRY2}.
Since $V$ is an $H$-module, $H$ acts on the coefficients of the formal series, endowing $V\{z\}$ with an $H$-module structure.
To continue the proof, we set
$$\mathcal{V}^0=\text{span}\{Y(u,z)v \ | \ u, v \in V \} \subseteq V\{z\},$$
and $$\mathcal{V}^1=\text{span}\{Y(u,-z)v \ | \ u, v \in V \} \subseteq V\{z\}.$$
As $V$ is an $H$-module vertex superalgebra, we can see that $\mathcal{V}^0$ and $\mathcal{V}^1$ are $H$-submodules of $V\{z\}$.
Given that the actions of $\mathcal{D}$ and $H$ on $V$ commute,
it can be deduced from Proposition \ref{Der}(2) that the map $e^{-z\mathcal{D}}: \mathcal{V}^0 \rightarrow \mathcal{V}^1$ is an $H$-isomorphism.

Since $Y(z)$ is injective,  the map $Y(z): V \otimes V \to \mathcal{V}^0$ is an $H$-isomorphism.
Similarly, it is easy to verify that the linear map
$$\widetilde{Y}(z): V \otimes V \to \mathcal{V}^1 \ \ \text{defined by } \ \ \widetilde{Y}(z) (u \otimes v)=Y(u,-z)v  \ \ \text{for} \ u, v \in V,$$
is also an $H$-isomorphism.
A straightforward calculation shows that $\tau =(\widetilde{Y}(z))^{-1}e^{-z\mathcal{D}}Y(z)$.
Therefore $\tau$ is an $H$-isomorphism.
The proof is complete.

\end{proof}

\begin{theorem} \label{thm-gr-alg}
Let $H$ be a finite-dimensional Hopf algebra.
Let $V$ be an inner faithful $H$-module vertex superalgebra such that $Y(z)$ is injctive.
Then $H \cong \C[G]$ as Hopf algebra for some finite automorphism group $G$ of $V$.
In paticular, $H$ must be a group algebra.
\end{theorem}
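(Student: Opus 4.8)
The plan is to show that the finite-dimensional Hopf algebra $H$ is cocommutative, and then invoke Lemma~\ref{gr-alg} to conclude that $H$ is a group algebra; the identification with $\C[G]$ for an automorphism group $G$ then follows by recovering the grouplike elements as automorphisms of $V$. The central tool is Lemma~\ref{H-iso}, which tells us that the super-flip map $\tau$ is an $H$-module isomorphism of $V \otimes V$. The key observation is that $\tau$ being $H$-linear constrains how $H$ acts on tensor products, and hence constrains the coproduct $\Delta$.

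**First I would** unwind what $H$-linearity of $\tau$ means concretely. For homogeneous $u,v \in V$ and $h \in H$, the action on the tensor product $V \otimes V$ is given by the coproduct: $h(u \otimes v) = \sum (h_1 u) \otimes (h_2 v)$, subject to the usual Koszul sign conventions for the action of $H$ on a tensor product of superspaces. Writing out $\tau(h(u\otimes v)) = h(\tau(u \otimes v))$ and comparing both sides, the signs coming from $\tau$ and from the super-tensor action must match up, and after cancelling the common sign $(-1)^{|u||v|}$ one is left with an identity that forces $\sum (h_1 v) \otimes (h_2 u) = \sum (h_2 v) \otimes (h_1 u)$ for all $u, v$ in $V$. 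In other words, $\sum h_1 \otimes h_2$ and $\sum h_2 \otimes h_1$ act identically on $V \otimes V$.

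**The crux of the argument** is to upgrade this ``equal action on $V\otimes V$'' to genuine cocommutativity of $H$, and this is where the inner-faithfulness and the kernel analysis enter. Consider the element $\Delta(h) - \tau_H(\Delta(h)) = \sum h_1 \otimes h_2 - \sum h_2 \otimes h_1$ inside $H \otimes H$; the previous step shows it annihilates $V \otimes V$ under the $H \otimes H$-action. By Lemma~\ref{thm-Hopf-ideal}, the kernel $K$ of the action of $H$ on $V$ is a bialgebra ideal, and by Lemma~\ref{bi-ideal} it is in fact a Hopf ideal; inner faithfulness forces $K = 0$. The tensor-product kernel likewise should be controlled by $K \otimes H + H \otimes K = 0$, so that an element of $H \otimes H$ annihilating $V \otimes V$ must itself vanish. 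This yields $\sum h_1 \otimes h_2 = \sum h_2 \otimes h_1$ for every $h \in H$, i.e.\ $H$ is cocommutative, and Lemma~\ref{gr-alg} gives $H \cong \C[G]$.

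**The main obstacle I anticipate** is the final descent from ``$\Delta(h) - \tau_H(\Delta(h))$ acts as zero on $V \otimes V$'' to ``$\Delta(h) - \tau_H(\Delta(h)) = 0$ in $H \otimes H$.'' Faithfulness of $V$ as an $H$-module (which inner faithfulness delivers via $K=0$) controls the action of $H$ on $V$, but one needs the analogous faithfulness statement for $H \otimes H$ acting on $V \otimes V$; this requires knowing that the annihilator of $V \otimes V$ in $H \otimes H$ is exactly $K \otimes H + H \otimes K$, which in turn uses finite-dimensionality of $H$ and a standard linear-algebra argument about annihilators in tensor products. Once cocommutativity is in hand, the remaining identification of the grouplike elements $g \in G$ with automorphisms of $V$ is routine: condition~(3) of Definition~\ref{Hopf-VSA} specializes, for grouplike $g$ (so $\Delta(g) = g \otimes g$), to $g(Y(u,z)v) = Y(gu,z)gv$, exhibiting each $g$ as an automorphism, and inner faithfulness guarantees the resulting homomorphism $G \to \operatorname{Aut}(V)$ is injective.
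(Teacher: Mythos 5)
Your proposal is correct and follows essentially the same route as the paper: use Lemma~\ref{thm-Hopf-ideal}, Lemma~\ref{bi-ideal} and inner faithfulness to get $K=0$ and hence faithfulness of $V\otimes V$ over $H\otimes H$, use Lemma~\ref{H-iso} to deduce $\sum h_1\otimes h_2=\sum h_2\otimes h_1$, apply Lemma~\ref{gr-alg}, and then embed $G$ into $\operatorname{Aut}(V)$ via the grouplike condition. The only cosmetic difference is your mention of Koszul signs in the $H$-action on $V\otimes V$, which in fact do not arise since $H$ acts by even operators, as you correctly note they cancel anyway.
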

\begin{proof}
	
The proof follows arguments similar to those in \cite[Theorem 5.5]{DRY2}. 
Let $K$ denote the kernel of the $H$-action on $V$. By Lemma \ref{bi-ideal} and Lemma \ref{thm-Hopf-ideal}, $K$ is a Hopf ideal of $H$. 
Since $V$ is an inner faithful $H$-module, we must have $K = 0$.
Thus $V$ is a faithful $H$-module, and consequently, the tensor product $V \otimes V$ is a faithful $H \otimes H$-module. 	
	
Furthermore, Lemma \ref{H-iso} establishes that the linear map $\tau: V \otimes V \to V \otimes V$ defined by $\tau(u \otimes v) = (-1)^{|u||v|} v \otimes u$ for homogeneous elements $u, v \in V$ is an $H$-isomorphism. This implies the identity 
$\sum h_{(1)}v \otimes h_{(2)}u = \sum h_{(2)}v \otimes h_{(1)}u$
for all $h \in H$ and $u, v \in V$. It follows that 
$\sum h_{(1)} \otimes h_{(2)} = \sum h_{(2)} \otimes h_{(1)}$
for all $h \in H$, proving the cocommutativity of $H$. Hence, by Lemma \ref{gr-alg}, $H \cong \mathbb{C}[G]$ as Hopf algebras for some finite group $G$.
	
To complete the proof, we show that $G$ embeds into $\operatorname{Aut}(V)$.  
Since $\mathbb{C}[G]$ is a Hopf algebra with coproduct $\Delta(g) = g \otimes g$ and counit $\varepsilon(g) = 1$ for $g \in G$, Definition \ref{Hopf-VSA} implies  
	\[
	g Y(v, z) w = Y(g v, z) g w  \quad \forall v, w \in V,   \  \quad \text{and} \quad g \mathbf{1} = \mathbf{1}.
	\]  
As $V$ is a $\mathbb{C}[G]$-module, each $g \in G$ acts invertibly on $V$ (with inverse $g^{-1}$). 
Faithfulness implies that $g|_V = \operatorname{id}_V$ only when $g = 1_G$, Thus, the map $g \mapsto (v \mapsto gv)$ embeds $G$ into $\operatorname{Aut}(V)$.

\end{proof}


\begin{thebibliography}{}

\bibitem[A]{A}	
T. Arakawa, A remark on the $C_2$-cofiniteness condition on vertex algebras, {\em Math. Z.} {\bf 270} (2012), 559-575.

\bibitem[ALPY1]{ALPY1} D. Adamovi\'{c}, C. H. Lam, V. Pedic\'{c}, N. Yu, On irreducibility of modules of Whittaker: twisted modules and nonabelian orbifolds, 
{\em J. Pure Appl. Algebra} {\bf 229} (2025), no. 1, Paper No. 107840, 19 pp.

\bibitem[CRY]{CRY} Z. Cui, L. Ren, C. Yang,  Duality and Galois correspondence for vertex superalgebras,
{\em J. Algebra} {\bf 669} (2025), 353-369.

\bibitem[DM]{DM}
C. Dong and G. Mason, On quantum Galois theory, 
{\em Duke Math. J.} {\bf 86} (1997), 305--321.

\bibitem[DRY1]{DRY1}
C. Dong, L. Ren, C. Yang, Orbifold theory for vertex algebras and Galois correspondence, {\em J. Algebra} {\bf 647} (2024), 144-171.   

\bibitem[DRY2]{DRY2}
C. Dong, L. Ren, C. Yang, Hopf actions on vertex algebras, {\em J. Algebra} {\bf 644} (2024), 1-22.

\bibitem[DW]{DW}
C. Dong and H. Wang, Hopf actions on vertex operator algebras. {\em J. Algebra} {\bf 514} (2018), 310-329.

\bibitem[DY]{DY} C. Dong and G. Yamskulna,
Vertex operator algebras, generalized doubles and dual pairs,
{\em Math. Z.} {\bf 241} (2002), 397-423.

\bibitem[K]{K}
V. Kac, Vertex algebras for beginners. Second edition. University Lecture Series, 10. {\em American Mathematical Society, Providence, RI} (1998)

\bibitem[Li1]{Li1} H. Li, Constructing quantum vertex algebras, {\em Intl. J. Mathematics} {\bf 17} (2006), 441-476.

\bibitem[Li2]{Li2} H. LI, Vertex algebras and vertex Poisson algebras. {\em Commun. Contemp. Math.}  {\bf 6} (2004), no. 1, 61-110.

\bibitem[Li3]{Li3}
H. Li, Local systems of vertex operators, vertex superalgebras and modules, {\em J. Pure Appl. Algebra}. {\bf 109} (1996), 143--195.


\bibitem[LL]{LL}
J. Lepowsky, H. Li, Introduction to Vertex Operator Algebras and Their Representations,
{\em Progress in Mathematics}, Vol. {\bf 227}, Springer, 2004.


\bibitem[M]{M}
S. Montgomery, Hopf Algebras and Their Actions on Rings, CBMS Regional Conference Series
in Mathematics, vol. {\bf 82}, AMS, 1993.

\bibitem[T]{T}
K. Tanabe, A Schur-Weyl type duality for twisted weak modules over a vertex algebra,  {\em Proc. Amer. Math. Soc.} {\bf 52} (2024), 3743-3755.

\bibitem[W]{W}
Warren D. Nichols, Quotients of hopf algebras, {\em Communications in Algebra}, {\bf 6} (1978), {1789-1800}.

\end{thebibliography}
\end{document}